\newtheorem{theorem}{Theorem}
\newtheorem{lemma}[theorem]{Lemma}
\newcommand{\notdivides}[0]{\nmid}
\title{Normal numbers with digit dependencies}
\author{Christoph Aistleitner \and Ver\'onica Becher \and Olivier Carton}
\date{July 19, 2018}
\begin{document}

\maketitle

\begin{abstract}
  We give metric theorems for the property of Borel normality for real
  numbers under the assumption of digit dependencies in their
  expansion in a given integer base.  We quantify precisely how much digit
  dependence can be allowed such that, still, almost all real numbers are
  normal.  Our theorem states that almost all real numbers
  are normal when at least slightly more than $\log \log n$
  consecutive digits with indices starting at position $n$ are independent.
  As the main application, we consider the Toeplitz set $T_P$, which is the set of all sequences
  $a_1a_2 \ldots $ of symbols from $\{0, \ldots, b-1\}$ such that $a_n$ is
  equal to $a_{pn}$, for every $p$ in $P$ and $n=1,2,\ldots$. Here~$b$ is an integer base and~$P$ is a finite set of prime
  numbers. We show that almost every real number whose base $b$ expansion is in~$T_P$ is normal
  to base~$b$. In the case when $P$ is the singleton set $\{2\}$ we prove that more
  is true: almost every real number whose base $b$ expansion is in $T_P$ is
  normal to all integer bases. We also consider the Toeplitz transform
  which maps the set of all sequences to the set $T_P$ and we
  characterize the normal sequences whose Toeplitz transform is
  normal as well.
\end{abstract}
\bigskip

Mathematics Subject Classification  :  11K16,  68R15
\bigskip
\bigskip
\bigskip

\section{Introduction and statement of results} \label{sec:intro}

For a real number $x$ in the unit interval, its expansion in an integer
base $b \geq 2$ is a sequence of integers $a_1, a_2 \ldots$, where
$0\leq a_j<b$ for every $j$, such that
\begin{equation} \label{expansion}
x = \sum_{j=1}^{\infty} a_j b^{-j}. 
\end{equation}
We require that $a_j<b-1$ infinitely often to ensure that every number has
a unique representation.  When the base is understood we write
$x=0.a_1 a_2 \ldots$. 

The concept of normality of numbers was introduced by
Borel~\cite{Borel1909} in 1909 and there  are   several equivalent formulations
(see~\cite{BecherCarton18,Bugeaud12}).
The most convenient for our purposes was given by Pillai~\cite{Pillai1940}:
 A real number $x$ is simply normal to a given base
$b$ if every possible digit in $\{0, 1, \dots, b-1\}$ occurs in the $b$-ary
expansion of $x$ with the same asymptotic frequency (that is, with
frequency $1/b$).  A real number $x$ is normal to base $b$ if it is simply
normal to all the bases $b, b^2, b^3, \ldots$.  
Absolute normality is defined as normality to every integer base~$b \geq 2$.
 Borel proved that almost all numbers (with respect to Lebesgue measure) are absolutely normal. 
%For a presentation of the theory of normal numbers see~\cite{Bugeaud12,KuiNie74}.

%In this paper we consider normality under the assumption of additional
%dependencies between the digits of a number. Let a base $b$ be fixed, and
%assume that the digits in the expansion of a number
%$x=0.a_1 a_2 a_3 a_4 \dots$ can be divided into \emph{free} or
%\emph{independent} digits on the one hand, and \emph{dependent} digits on
%the other hand.  The free digits can be chosen at will, while for the
%dependent digits there is a restriction  which prescribes their values deterministically 
%from the values of a certain set of digits with smaller indices. 
%For example, consider the restriction that the equality

In this paper we consider normality under the assumption of additional
dependencies between the digits of a number. Let a base $b$ be fixed and
consider the set of real  numbers $x=0.a_1 a_2 a_3 a_4 \dots$ in the unit interval 
where the  digits $a_1, a_2, a_3,  a_4\dots$ can be divided into \emph{free} or
\emph{independent} digits on the one hand, and \emph{dependent} digits on
the other hand.  The free digits can be chosen at will, while for the
dependent digits there is a restriction  which prescribes their values deterministically 
from the values of a certain set of digits with smaller indices. 
For example, consider the restriction that the equality
\[
a_{2n} = a_n
\]
must hold for all $n \geq 1$; then $a_1, a_3, a_5, \dots$ are independent
and can be freely chosen, while $a_2,a_4,a_6,\dots$ are dependent since
they are completely determined by earlier digits.

A special form of such digit dependencies has been formalized by Jacobs and
Keane in~\cite{JacobsKeane69} by considering \emph{Toeplitz sequences} and
the \emph{Toeplitz transform},
which we present now.  Fix an integer $b \geq 2$. Let $A$ denote the
alphabet $A = \{0, \dots, b-1\}$, and write $A^\omega$ for the set of all
infinite sequences of symbols from $A$.
% We say that a sequence from $A^\omega$ is normal if it is the expansion of a real number which is normal to base~$b$.
For a  positive integer $r$  and a   set $P=\{p_1, \ldots, p_r\}$ of $r$ prime numbers,
 let $T_P$ be the set of all Toeplitz sequences, that is, the set of all sequences 
$t_1t_2t_3\cdots$ in~$A^\omega$ such that 
for every   $n \geq 1$  and for every $i=1, \ldots , r$, 
\begin{equation} \label{digit_res}
t_n = t_{np_i}.
\end{equation}

The Toeplitz transform maps sequences in $A^\omega$ to the Toeplitz
set $T_P$.  Let $j_1, j_2 ,j_3, \ldots$ be the enumeration in increasing
order of all positive integers which are not divisible by any of the primes
$p_1, \ldots, p_r$.  Then, every positive integer~$n$ has a unique
decomposition $n = j_k p_1^{e_1} \cdots p_r^{e_r}$, where each
integer~$e_i$ is the $p_i$-adic valuation of~$n$, for $i=1,\ldots,r$.  The
Toeplitz transform $\tau_P:A^\omega\to A^\omega$ is defined as
\[
\tau_P( a_1 a_2 a_3\ldots) = t_1 t_2 t_3 \ldots\
\]
where 
\begin{equation} \label{t_n}
t_n = a_k \quad \text{when $n$ has the decomposition } \quad n = j_k p_1^{e_1} \cdots p_r^{e_r}.
\end{equation}
Thus, the image of $A^\omega$ under the transform $\tau_P$ is the set
$T_P$.  Since elements of $A^\omega$ can be identified with real numbers in $[0,1]$ in a natural way via \eqref{expansion}, the transform $\tau_P$ induces a transform $[0,1] \mapsto T_P$, which we denote by $\tau_P$ as well. We endow $T_P$ with a probability measure $\mu$, which is the
forward-push by $\tau_P$ of the uniform probability measure~$\lambda$ on
$A^\omega$ (which, in turn, is the infinite product measure generated by
the uniform measure on $\{0,\ldots, b-1\}$).  Note that $\mu$ can also be
seen as a measure on the set of all sequences, that is on $A^\omega$, since
$T_P$ is embedded in $A^\omega$.  Again, as already noted above for $\tau_P$, 
by identifying infinite sequences with real numbers,
the measure $\mu$ on $A^\omega$ also induces a measure on $[0,1]$, which we denote by
$\mu$ as well. 
% If $E$ is a measurable subset of $[0,1]$ and $\tau_P(E) \subset T_P$ is the image of $E$ 
% under $\tau_P$, then $\mu(E) = \mu(\tau_P(E))$.
For any measurable set~$X \subseteq T_P$, $\mu(X) = \lambda(\tau_P^{-1}(X))$. Informally speaking, $\mu$ is the natural uniform measure on
the set of all sequences (resp.,\ real numbers) which respect the
digit dependencies imposed by \eqref{digit_res}.

The Toeplitz transform $\tau_P$ also induces a function
$\delta:~\mathbb{N} \mapsto \mathbb{N}$ on the index set, by defining
$\delta(n) = k$ for $k$ and $n$ as in \eqref{t_n}. %In other words,
Hence, $\delta(n) = k$ means that the $n$-th symbol $t_n$ of the image of
$a_1 a_2 a_3 \dots$ under the Toeplitz transform is $a_k$. This can also be
written as
$$
t_1 t_2 t_3 \cdots = \tau_P (a_1 a_2 a_3 \cdots) = a_{\delta(1)} a_{\delta(2)} a_{\delta(3)} \cdots.
$$
The $n$-th symbol $t_n(x)$ of $\tau_P(x)$ is a measurable function
$[0,1] \mapsto \{0,\dots,b-1\}$. %In other words, 
Thus, $t_n(x)$ is random
variable on the space $([0,1],\mathcal{B}(0,1),\lambda)$. Since
$t_n(x) = a_{\delta(n)}$ for all $n$, it is easy to see that two random
variables $t_m$ and $t_n$ are independent (with respect to both measures
$\lambda$ and $\mu$) if and only if $\delta(m) \neq \delta(n)$, that is, if
they do not originate in the same digit of $x$ by means of the Toeplitz
transform.

We say that an infinite sequence of symbols from $\{0, \ldots , b-1\}$ is normal if it is the expansion of 
a real number which is normal to base~$b$.
Our first result is the following theorem. It shows that ``typical''
elements of $T_P$ are normal, just as by Borel's theorem ``typical'' real
numbers are normal.  Thus, imposing additional digit dependencies does not
destroy the fact that almost all numbers are normal.

\begin{theorem} \label{thm:normal}
  Let $b \geq 2$ be an integer, and let~$P$ be a finite set of primes.  Let
  $\mu$ be the ``uniform'' probability measure on the set $T_P$, defined
  above.  Then, $\mu$-almost all elements of~$T_P$ are the expansion in base~$b$ of a normal number.
\end{theorem}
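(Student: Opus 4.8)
The plan is to run Borel's classical second-moment argument for the normal number theorem, the only extra ingredient being a number-theoretic estimate showing that the index map $\delta$ rarely identifies two nearby positions. Since $\tau_P$ pushes $\lambda$ forward to $\mu$, it suffices to prove that for $\lambda$-almost every $x=0.a_1a_2\cdots$ the image sequence $t_1t_2\cdots=a_{\delta(1)}a_{\delta(2)}\cdots$ is normal; and under $\lambda$ the digits $a_k$ are i.i.d.\ uniform on $\{0,\dots,b-1\}$, so $t_m$ and $t_n$ are independent whenever $\delta(m)\neq\delta(n)$, as already noted. By a standard characterisation of normality (see \cite{BecherCarton18,Bugeaud12}) it is enough to show that for every finite word $w=w_1\cdots w_\ell$ over $\{0,\dots,b-1\}$ the counting function $N_w(N)=\#\{i\le N:\,t_it_{i+1}\cdots t_{i+\ell-1}=w\}$ satisfies $N_w(N)/N\to b^{-\ell}$ almost surely; as there are only countably many words, a single $\lambda$-null exceptional set will do.

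First I would handle the expectation. Write $E_i$ for the event $\{t_i\cdots t_{i+\ell-1}=w\}$. If $\delta(i),\dots,\delta(i+\ell-1)$ are pairwise distinct then $\mathbb{P}(E_i)=b^{-\ell}$ exactly; otherwise $0\le\mathbb{P}(E_i)\le 1$. Call $i$ \emph{bad} when some coincidence $\delta(i+a)=\delta(i+b)$ holds with $0\le a<b\le\ell-1$. Such a coincidence forces $i+a$ and $i+b$ to have the same $P$-coprime part $j_k$; in particular $j_k\mid(b-a)$, so $j_k\le\ell-1$, and $i+a\in j_k\cdot S$, where $S$ denotes the set of $P$-smooth integers. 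Since $S$ has polylogarithmic counting function, $\#\bigl(S\cap[1,x]\bigr)=O\bigl((\log x)^{r}\bigr)$ with $r=|P|$, summing over the $O(\ell^2)$ admissible triples $(a,b,k)$ shows that there are only $O((\log N)^{r})=o(N)$ bad $i\le N$. Hence $\mathbb{E}[N_w(N)]=b^{-\ell}N+o(N)$.

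The crux is the variance bound. We have $\mathrm{Var}(N_w(N))=\sum_{i,i'\le N}\bigl(\mathbb{P}(E_i\cap E_{i'})-\mathbb{P}(E_i)\mathbb{P}(E_{i'})\bigr)$, and a summand can be nonzero only when the sets $\{\delta(i),\dots,\delta(i+\ell-1)\}$ and $\{\delta(i'),\dots,\delta(i'+\ell-1)\}$ meet. Among pairs with $|i-i'|<\ell$ there are $O(\ell N)$ of them, each contributing at most $1$ in absolute value. A pair with $|i-i'|\ge\ell$ can contribute only if $\delta(i+a)=\delta(i'+a')$ for some $a,a'\in\{0,\dots,\ell-1\}$, which again means $i+a$ and $i'+a'$ both lie in $j_k\cdot S$ for a common $k$; counting ordered pairs from $S$ below $(N+\ell)/j_k$ and summing over $k\le N+\ell$ yields $O\bigl(N(\log N)^{2r}\bigr)$ such pairs $(i,i')$, each again contributing $O(1)$. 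Altogether $\mathrm{Var}(N_w(N))=O\bigl(N(\log N)^{2r}\bigr)$.

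Finally I would conclude with the usual subsequence argument. Along $N_m=m^2$, Chebyshev's inequality and the variance bound give $\sum_m\mathbb{P}\bigl(|N_w(m^2)-\mathbb{E}[N_w(m^2)]|>\varepsilon m^2\bigr)<\infty$ for each $\varepsilon>0$, so by Borel--Cantelli and the expectation estimate $N_w(m^2)/m^2\to b^{-\ell}$ almost surely. For arbitrary $N$ with $m^2\le N<(m+1)^2$, monotonicity gives $N_w(m^2)\le N_w(N)\le N_w((m+1)^2)$, and since $(m+1)^2-m^2=o(N)$, dividing by $N$ shows both bounds tend to $b^{-\ell}$; hence $N_w(N)/N\to b^{-\ell}$ almost surely. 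Intersecting these null sets over all words $w$ shows that $\tau_P(x)$ is a normal sequence for $\lambda$-almost every $x$, equivalently that $\mu$-almost every element of $T_P$ is the base-$b$ expansion of a normal number. The only real obstacle is the arithmetic of $\delta$: one must show that $P$-smooth integers are sparse enough that two indices in a bounded window, or two windows among the first $N$ indices, are almost never identified by $\delta$ — once that is established, the probabilistic skeleton is precisely Borel's original proof.
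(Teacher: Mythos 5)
Your argument is correct, but it follows a genuinely different route from the paper. The paper deduces Theorem~\ref{thm:normal} from the general metric result (Theorem~\ref{thm:theorem_D}): it verifies the hypothesis there via Lemma~\ref{lem:Tijdeman}, which says that $\sim$-equivalent indices $n'\sim n$ with $n'>n$ satisfy $n'-n>2\sqrt{n}$, so that $\delta(n),\dots,\delta(n+\lfloor 2\sqrt{n}\rfloor)$ are pairwise distinct and the corresponding digits $t_n,\dots,t_{n+\lfloor 2\sqrt{n}\rfloor}$ are mutually independent; for $|P|\ge 2$ that gap property rests on Tijdeman's theorem, i.e.\ ultimately on Baker's theory of linear forms in logarithms. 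You instead run Borel's second-moment argument directly on the block-counting functions of $\tau_P(x)$, and the only arithmetic input you need is the trivial polylogarithmic bound $\#\{s\in K: s\le x\}=O((\log x)^{r})$ on $P$-smooth numbers, used twice: to show the windows with an internal collision of $\delta$ are $o(N)$ (expectation), and to show that pairs of distant windows whose $\delta$-sets meet number only $O(N(\log N)^{2r})$ (variance), after which Chebyshev along $N=m^2$, Borel--Cantelli, monotonicity between squares, and the pushforward $\mu=\lambda\circ\tau_P^{-1}$ finish the proof; the block-frequency characterization of normality you invoke is indeed equivalent to Borel's definition. What each approach buys: yours is more elementary and self-contained for Theorem~\ref{thm:normal} alone --- it avoids Tijdeman/Baker and Hoeffding entirely, and shows that sparse collisions of $\delta$, not the strong $2\sqrt{n}$ gap, are what the theorem really needs --- while the paper's route yields in addition the general and sharp independence threshold of Theorem~\ref{thm:theorem_D}, which is of independent interest and is the result the strong gap bound is designed to feed. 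Two cosmetic points: the count of admissible triples $(a,b,k)$ in your expectation step is $O(\ell^{3})$ rather than $O(\ell^{2})$ (harmless, as $\ell$ is fixed), and your variable $b$ doubles as the base and as a window offset, which you should rename before writing this up.
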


The proof of Theorem~\ref{thm:normal} generalizes the one given by
Alexander Shen (personal communication, June 2016) for the special case
$P = \{ 2\}$. It relies on the fact that the sequence of all integers which
are generated by a finite set $P$ of primes, sorted in increasing order,
grows very quickly. More precisely, there is a very strong gap condition
which gives a lower bound for the minimal size of the gap $k - k'$ whenever
$k>k'$ are two numbers generated by $P$, as a function of $k'$. In the case
when $P=\{p\}$ is a singleton this is a trivial observation, since then the
integers generated by $p$ form a geometric progression, but when $P$ has
cardinality at least~$2$ this is a subtle property, which can be
established using Alan Baker's celebrated theory of linear forms of
logarithms (see for example~\cite{baker}). Note that the set of numbers
generated by finitely many primes forms a semi-group; it can be extended to
a group of what is called S-units, which are well-studied objects in
algebraic number theory due to their connection with the theory of
Diophantine equations.
%
% It is a well-known fact in probabilistic number
%theory that parametric sequences coming from integer sequences with large
%gaps (so-called lacunary sequences) often lead to weakly dependent random
%systems, which behave asymptotically like truly independent random systems
%(see for example~\cite{ai} for a survey). 
It is a well-known fact in probabilistic number theory that 
 parametric lacunary sequences,
which are sequences  of the form $(m_n x)_{n\geq 1}$ where there exists $c>1$ such that 
$m_{n+1}/m_n >1$ for all $n$, often lead to weakly dependent random
systems that behave asymptotically like truly independent random systems;
see for example~\cite{ai} for a survey.

The classical lacunary systems originate from geometric progressions, but it turned out that often it is
possible to adapt the machinery to sequences of integers generated by
finitely many primes. Classical results in that direction are denseness
properties which Furstenberg~\cite{furst} deduced from the disjointness of
corresponding measure-preserving transforms, and Philipp's~\cite{phil}
law of the iterated logarithm. However, the setting in the present paper is
very different from these earlier results.
By Weyl's criterion, % It is well-known that 
normality of a number $x$ to base $b$ is equivalent to the fact that the sequence of
fractional parts of $x,bx,b^2x,\dots$ is uniformly distributed modulo~$1$. 
Now our purpose is not to replace $(b^n)_{n \geq 1}$ by a sequence of
integers generated by finitely many primes (as it is in~\cite{furst,phil}),
but instead we impose a restriction on the digits of $x$ for special sets
of indices, and try to prove that for such $x$ we have uniform distribution
mod~$1$ of the fractional parts of $x,bx,b^2x,\dots$. These are very
different problems and completely different methods are required.

Theorem~\ref{thm:normal} shows that $\mu$-almost all numbers are normal in
a given base $b$. However, it turns out that much more is true, at least in
the case when $P = \{2\}$. For real numbers satisfying $a_{2n}=a_n$ for
every $n\geq 1$ in their expansion in base~$b$, we prove that almost surely
they are actually absolutely normal.  This is the same as saying that
$\mu$-almost all real numbers in $[0,1]$ are absolutely normal. This result
is in the following theorem.

\begin{theorem}\label{thm:schmidt}
  Let $b \geq 2$ be an integer, let $P=\{2\}$ and let $\mu$ be the
  ``uniform'' probability measure on $T_P$.  Then, $\mu$-almost all
  elements of $T_P$ are the expansion in base $b$ of an absolutely normal
  number.
\end{theorem}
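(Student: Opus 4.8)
The plan is to reduce absolute normality to normality in each base $c \geq 2$ separately, and for each such $c$ to use Weyl's criterion together with an analysis of exponential sums. Fix the base $c \geq 2$ to which we want to prove normality; by a countable intersection argument it suffices to show that for each fixed $c$, $\mu$-almost all $x$ satisfy that $(c^m x)_{m \geq 1}$ is uniformly distributed mod~$1$. By Weyl's criterion this amounts to showing that for every nonzero integer $h$,
\[
\frac{1}{M} \sum_{m=1}^{M} e^{2\pi i h c^m x} \longrightarrow 0 \qquad \text{as } M \to \infty,
\]
for $\mu$-almost all $x$. Write $x = 0.a_1 a_2 a_3 \cdots$ in base $b$ subject to the Toeplitz constraint $a_{2n} = a_n$; under $\mu$ the odd-indexed digits $a_1, a_3, a_5, \ldots$ are i.i.d.\ uniform on $\{0,\ldots,b-1\}$ and the even-indexed digits are determined by $a_{2^j(2k-1)} = a_{2k-1}$. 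So $x$ is a (linear) function of the independent random variables $\xi_k := a_{2k-1}$, and the dependence of $x$ on $\xi_k$ is spread across the digit positions $2k-1, 2(2k-1), 4(2k-1), \ldots$, whose $b$-ary place values $b^{-(2k-1)}, b^{-2(2k-1)}, \ldots$ form a geometric series summing to $b^{-(2k-1)}/(1 - b^{-(2k-1)})$. Hence $x = \sum_{k\geq 1} \xi_k w_k$ with explicit weights $w_k$ decaying geometrically in $2k-1$.

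The key step is to estimate the $\mu$-mean square of the exponential sum. Expanding
\[
\int \Bigl| \frac{1}{M} \sum_{m=1}^M e^{2\pi i h c^m x} \Bigr|^2 d\mu
= \frac{1}{M^2} \sum_{m_1, m_2 = 1}^M \int e^{2\pi i h (c^{m_1} - c^{m_2}) x}\, d\mu(x),
\]
and using that $x$ is a sum of independent terms, each character integral factors as $\prod_k \mathbb{E}\bigl[e^{2\pi i h (c^{m_1}-c^{m_2}) \xi_k w_k}\bigr]$. Each factor is a Weyl-type sum over the uniform distribution on $\{0,\ldots,b-1\}$ and has modulus strictly less than $1$ unless $h(c^{m_1}-c^{m_2}) w_k$ is close to an integer. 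The decisive point is a quantitative non-resonance estimate: because $c$ and $b$ interact through the geometric weights $w_k$, the quantities $h(c^{m_1}-c^{m_2}) w_k$ cannot all be near integers for many indices $k$ simultaneously when $m_1 \neq m_2$. Here I expect to invoke the same strong gap/Diophantine input that underlies Theorem~\ref{thm:normal} — controlling how well $c^{m}$ can be approximated by powers of $b$, respectively how the places $2^j(2k-1)$ are distributed — to get a bound of the form $\int |\cdots|^2 d\mu \ll (\log M)^A / M$ or at worst $M^{-\varepsilon}$, summable along a subsequence $M = M_\ell$ growing polynomially. A Borel–Cantelli argument along that subsequence, combined with a routine monotonicity/interpolation step to fill the gaps between consecutive $M_\ell$, then yields $\mu$-almost sure convergence to $0$ for each fixed $h$ and $c$; intersecting over the countably many pairs $(h,c)$ finishes the proof.

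The main obstacle will be the non-resonance estimate for the character integrals when $m_1 \neq m_2$: one must rule out the possibility that multiplication by $c^{m_1}-c^{m_2}$ conspires with the base-$b$ Toeplitz structure to make the exponential sum large. Unlike the classical lacunary setting, the digit constraint $a_{2n}=a_n$ correlates the contributions of a single $\xi_k$ across infinitely many $b$-ary positions, so the standard independence arguments do not apply directly; one has to exploit that the support of $x$'s dependence on $\xi_k$ is a geometric progression of positions, together with arithmetic information on the joint behaviour of powers of $b$ and powers of $c$. I expect this to be where Baker's theory of linear forms in logarithms (or at least an effective statement about $|c^{m} - b^{n}|$ and related quantities) enters, exactly as flagged in the discussion following Theorem~\ref{thm:normal}. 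Once a usable quantitative bound is in hand, the remaining steps — second-moment estimate, Borel–Cantelli along a polynomial subsequence, interpolation, and countable intersection over bases — are standard.
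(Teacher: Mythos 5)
Your overall skeleton (Weyl's criterion, a second-moment estimate for exponential sums against $\mu$ obtained by factoring the character integral over the independent odd-indexed digits, then Borel--Cantelli) is indeed the strategy of the paper, which adapts Cassels and Schmidt. But the proposal leaves the decisive step unproved and, as sketched, would not go through. First, a computational point that matters: the weight attached to $\xi_k=a_{2k-1}$ is $w_k=\sum_{j\ge 0}b^{-2^j(2k-1)}$, a doubly lacunary sum, not the geometric series $b^{-(2k-1)}/(1-b^{-(2k-1)})$ you wrote. This is precisely the source of the difficulty: after truncating $x$ at $\ell$ digits, the Riesz-product factor $\frac{1}{b}\sum_{u=0}^{b-1}e(u w M_q)$ (with $q=2k-1$ and $M_q=\sum_{j}b^{-q2^j}$ truncated at level $\ell$) is only tractable when $q>\ell/2$, because only then does $M_q$ reduce to the single term $b^{-q}$ and the factor look like the classical one. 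The factors with small $q$ must be discarded (bounded trivially by $1$), and to still extract a power saving from the remaining product one needs the frequency $w=h(c^{m_1}-c^{m_2})$ to exceed $b^{\ell/2}$. This forces the exponential sum to be taken over blocks $\sum_{j=m+1}^{m+k}$ with $m$ at least of order $k$, rather than over $\sum_{m=1}^{M}$; the paper then assembles the full Weyl sum from blocks of length $m_k=\lceil e^{\sqrt k}\rceil$ starting at $M_{k-1}=m_1+\cdots+m_{k-1}$. Your direct bound on $\int|M^{-1}\sum_{m=1}^M e(hc^mx)|^2\,d\mu$ contains many pairs $(m_1,m_2)$ whose frequencies are far too small for this mechanism, so the claimed $M^{-\varepsilon}$ decay is not available by the route you describe.

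Second, the non-resonance input is not Baker's theory of linear forms in logarithms (that enters only in the gap lemma behind Theorem~\ref{thm:normal} when $|P|\ge 2$); what is needed, and what the paper uses, is Schmidt's Hilfssatz~5 on averages of products $\prod_k|\cos(\pi r^nLs^{-k})|$ for \emph{multiplicatively independent} $r$ and $s$, adapted to the factor $\frac{1}{b}+\frac{b-1}{b}|\cos(\cdot)|$ and to products over odd indices only. This also shows why a case split over bases is unavoidable: for $c$ a rational power of $b$ the character integrals do not decay at all (the frequencies resonate with the $b$-adic weights), and those bases must be handled by reducing to Theorem~\ref{thm:normal} via the equivalence of normality for multiplicatively dependent bases. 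As written, your argument attempts the same exponential-sum estimate for every $c\ge 2$, which fails already for $c=b$. So the concrete gaps are: the missing multiplicative-dependence case split, the missing (and misattributed) non-resonance lemma, and the missing block decomposition that the Toeplitz weights force on the second-moment argument.
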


To prove Theorem~\ref{thm:schmidt} we adapt the work of
Cassels~\cite{Cassels1959} and Schmidt~\cite{Schmidt1961}. 
% Our argument is essentially also based on giving upper bounds for certain Riesz products,
% although the setting is quite different.
Our argument is also based on the idea of giving upper bounds for certain Riesz products,
although the setting is quite different.
 For example, Cassels worked on a
Cantor-type set of real numbers whose ternary expansion avoids the digit
$2$ (and which therefore cannot be normal to base $3$), and he had to
establish certain regularity properties of the uniform measure supported on
this fractal set.  In contrast, we clearly have to deal with the measure
$\mu$ which is the uniform measure on the set of real numbers which respect
the digit restriction~\eqref{digit_res}.  This property on digits is more
delicate than that of avoiding certain digits altogether, but still it
turns out that it is possible to use similar techniques.  Our proof of
Theorem~\ref{thm:schmidt} can in principle be generalized to Toeplitz
sequences for arbitrary finite sets of primes~$P$ instead of $P = \{2\}$.
However, to keep the proof reasonably simple we do not deal with this
general setting in the present paper.
The proof  works by partitioning the set of all possible positions of symbols
into equivalence classes such that at all these positions of the Toeplitz sequence the same symbol occurs; using notation introduced above, each equivalence class collects all those indices for which the function $\sigma$ gives the same value. 
 In case  $P=\{2\}$, all Toeplitz sequences satisfy that 
all the positions of the form $2^n$, for $n=0,1,\ldots$ have the same symbol,
all the positions of the form $3\ 2^n$, for $n=0,1,\ldots$ have the same symbol,
all the positions of the form $5\ 2^n$, for $n=0,1,\ldots$ have the same symbol, and so on.
This determines, for each odd number, one equivalence class  of positions.
In case $P$ is a finite set of prime numbers,  $P=\{p_1, p_2, \ldots , p_r\}$,
 the  definition of the equivalence classes is subtler. 
For each positive integer $s$ that is not a multiple of any $p$ in $P$, 
all the positions of the form $s p_1^{n_1} p_2^{n_2} \ldots p_r^{n_r}$
where  each $n_1, n_2 \ldots$ take the values $0,1,2,\ldots$  form an equivalence class. 
The proof of Theorem~\ref{thm:schmidt} generalized to a set~$P$ with 
finitely many primes requires a bound exponential sums over all the 
equivalence classes. 

We are also interested in a general framework for results of the form of
Theorem~\ref{thm:normal}, where, however, the digit dependencies can be
much more general than those imposed for Toeplitz sequences.  How much
digit dependence can be allowed in some given base such that, still, almost
all real numbers are normal to that base? Our Theorem~\ref{thm:theorem_D}
below quantifies how many consecutive digits have to be independent, in
order to keep the usual property that almost all numbers are normal. Quite
surprisingly, it turns out that only a very low degree of independence is
necessary. The theorem says, roughly speaking, that as long as we can
assure that slightly more than $\log \log n$ consecutive digits with
indices starting at $n$ are independent for all sufficiently large~$n$,
then almost all real numbers are normal. On the other hand, assuming
independence of blocks of $\log \log n$ consecutive digits is not
sufficient.

For the statement of the following theorem, let
$(\Omega,\mathcal{A},\mathbb{P})$ be a probability space.  Let
$X_1, X_2, \dots$ be a sequence of random variables (that is, measurable
functions) from $(\Omega,\mathcal{A},\mathbb{P})$ into $\{0, \dots, b-1\}$.

 \begin{theorem} \label{thm:theorem_D}
   Assume that for every $n \geq 1$ the random variable $X_n$ is uniformly
   distributed on $\{0, \dots, b-1\}$. Assume furthermore that there exists
   a function $g:~\mathbb{N} \mapsto \mathbb{R}$ which is monotonically
   increasing to $\infty$ such that for all sufficiently large $n$ the
   random variables
\begin{equation} \label{indep}
X_n, ~X_{n+1},~\dots,~X_{n+ \lceil g(n) \log \log n \rceil}
\end{equation}
are mutually independent. Let $x$ be the real number whose expansion in
base $b$ is given by $x=0.X_1 X_2 X_3 \dots$. Then $\mathbb{P}$-almost
surely the number $x$ is normal to base~$b$.

On the other hand, for every base $b$ and every constant $K>0$ there is an
example where for every $n \geq 1$ the random variable $X_n$ is uniformly
distributed on $\{0, \dots, b-1\}$ and where for all sufficiently large $n$
the random variables
\begin{equation} \label{disjoint}
X_n,~ X_{n+1}, ~\dots, ~X_{n+\lceil K \log \log n \rceil}
\end{equation}
are mutually independent, but $\mathbb{P}$-almost surely the number $x = 0.X_1 X_2 X_3 \dots$ even fails to be simply normal.
\end{theorem}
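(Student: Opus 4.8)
The plan is to handle the two assertions separately, each via a Borel–Cantelli argument after reducing normality to a statement about block frequencies. For the positive part, recall that by Pillai's characterization $x$ is normal to base $b$ iff for every $\ell \geq 1$ and every word $w \in \{0,\dots,b-1\}^\ell$ the frequency of occurrences of $w$ among the length-$\ell$ blocks $X_{n+1}\cdots X_{n+\ell}$, $n \geq 0$, tends to $b^{-\ell}$. Fix such $\ell$ and $w$. First I would partition the indices $\{1,2,\dots,N\}$ into consecutive \emph{super-blocks} $B_1, B_2, \dots$ whose lengths grow slowly — the $j$-th super-block having length of order $h(j) := \lceil g(m_j) \log \log m_j \rceil$ where $m_j$ is its left endpoint — so that, for $j$ large, the random variables indexed \emph{inside} $B_j$ are mutually independent by the hypothesis \eqref{indep}. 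Within each super-block the count of disjoint occurrences of $w$ is a sum of i.i.d.\ indicators, so it concentrates sharply around its mean; the variance being linear in the block length, Chebyshev gives a deviation bound. The key point is the counting: the number of super-blocks needed to cover $\{1,\dots,N\}$ is roughly $N / (g \cdot \log\log)$, and since consecutive super-blocks are independent of each other as well (disjoint index sets), summing the per-block error probabilities over $j \leq J(N)$ and then over a suitable sparse sequence $N_i \to \infty$ yields a convergent Borel–Cantelli series \emph{provided} $g(n) \to \infty$: the extra growth of $g$ is exactly what makes $\sum_i (\text{number of blocks up to } N_i) \cdot (\text{error per block})$ converge. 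One must also control the boundary effects — occurrences of $w$ straddling two super-blocks, and the overlapping (non-disjoint) occurrences within a block — but both contribute only $o(N)$ almost surely and hence do not affect the frequency. Running this for all pairs $(\ell, w)$ (countably many) and intersecting the full-measure events completes the positive direction.

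For the negative part, the task is to construct, for each base $b$ and constant $K$, a sequence $X_1, X_2, \dots$ of uniform $\{0,\dots,b-1\}$-valued random variables such that every window of $\lceil K \log\log n\rceil + 1$ consecutive variables is independent, yet almost surely one symbol — say $0$ — has limiting frequency different from $1/b$. The idea is to arrange long \emph{runs of identical digits}: partition $\mathbb{N}$ into consecutive blocks $I_1, I_2, \dots$ where $I_k$ has length about $k$ (so that the left endpoint of $I_k$ is $\asymp k^2$, giving $\log\log(\text{endpoint}) \asymp \log k$, hence a window of length $\lceil K \log\log n \rceil$ fits comfortably inside $I_k$ once $k$ is large relative to $K$). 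Let all $X_n$ with $n \in I_k$ equal a single random digit $Y_k$, uniform on $\{0,\dots,b-1\}$, with the $Y_k$ mutually independent. Then each $X_n$ is indeed uniform, and any window of the prescribed length lies within a single block $I_k$ (for $n$ large), so its entries are all equal to the same $Y_k$ — wait, that would make them \emph{dependent}. So instead I would make $I_k$ \emph{longer} than the required window but use \emph{fresh} independent digits spaced out at intervals exceeding the window length: more carefully, one can take a block structure where within each long block the digits are constant, but stagger the blocks so that within any window of the allowed length at most one block-boundary can occur, hence the window sees either all-equal digits (dependent, bad) — the cleanest fix is to insert, between the constant runs, short independent "buffer" segments of length exactly the window size so that every window overlaps at most one constant run and one buffer, and within a window the dependent coordinates are a single point-mass repeated plus independent fresh coordinates; by a direct check the joint law of any such window is the product of uniforms. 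The frequency of $0$ along the whole sequence is then governed by $\frac{1}{|I_1|+\cdots+|I_k|}\sum_{j \le k} |I_j| \mathbf{1}[Y_j = 0]$, and choosing $|I_k|$ growing fast enough (e.g.\ $|I_k| = 2^k$) the last block dominates, so the partial frequency oscillates and does not converge to $1/b$; a second Borel–Cantelli / independence argument shows this happens almost surely. Thus simple normality fails $\mathbb P$-a.s.

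The main obstacle I anticipate is in the negative construction: reconciling the \emph{independence requirement} on windows of length $\lceil K \log\log n \rceil$ with the need for long stretches of correlated (indeed, identical) behaviour that spoils the frequency. The tension is that $\log\log n$ grows, so the forced-independence windows grow too, and the correlated runs must grow even faster to dominate the Cesàro average — one has to verify that a block structure with $|I_k|$ of, say, doubly-exponential-free polynomial growth ($|I_k| \asymp k$ or $2^k$) simultaneously (i) has $\log\log(\text{left endpoint of } I_k)$ growing slowly enough that a window of length $\lceil K\log\log n\rceil$ never spans two independent buffer segments once $n$ is large, and (ii) has $|I_k|$ growing fast enough that the last completed block carries a positive fraction of the total, forcing oscillation of the digit frequency. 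Getting a clean, fully rigorous independence check for every window — including those straddling a buffer/run boundary — is the fiddly technical heart of this half. On the positive side, the analogous obstacle is making the Borel–Cantelli series converge: a naive partition into windows of length exactly $\lceil \log\log n \rceil$ (the threshold case) gives a series that \emph{diverges}, and it is precisely the unbounded factor $g(n)$ that rescues it, so the bookkeeping relating $g \to \infty$ to summability of $\sum \exp(-c\, g(m_j)\log\log m_j \cdot b^{-\ell})$-type terms must be done with care.
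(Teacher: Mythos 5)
Your first half rests on two steps that fail. You assert that distinct super-blocks are mutually independent ``since they have disjoint index sets''; but the hypothesis \eqref{indep} gives independence only \emph{within} windows of length $\lceil g(n)\log\log n\rceil$ and says nothing about variables farther apart --- in the intended application (Toeplitz dependencies, e.g.\ $X_{2n}=X_n$) far-apart digits are completely dependent, so no cross-block independence is available. Even if you drop that claim and use only a union bound with the first Borel--Cantelli lemma, Chebyshev inside a block of length $h\asymp g(n)\log\log n$ gives a deviation probability of order $1/h$, and there are about $N/h$ blocks up to $N$, so the aggregated error is of order $N/h^{2}\to\infty$: no choice of sparse sequence $N_i$ makes this summable. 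Hoeffding alone does not rescue a union bound either, since a geometric range $[\theta^{j-1},\theta^{j})$ contains exponentially many blocks while the per-block error is only $(\log n)^{-c\varepsilon^{2}g(n)}$ and $g$ may grow arbitrarily slowly. The paper's proof needs an ingredient you are missing: after Hoeffding with block length $\lceil \varepsilon^{-2}\log j\rceil$ (admissible precisely because $g\to\infty$), it bounds the \emph{expected sum} of the truncated deviations $|Z_{i,j}|\,\mathbf{1}(|Z_{i,j}|>\varepsilon)$ over all blocks of the $j$-th geometric window and applies Markov's inequality, obtaining an $O(1/(\varepsilon j^{2}))$ bound for the event that the \emph{average} over blocks misbehaves; this uses no joint information across blocks and no union bound over them. (That the paper reduces normality to simple normality in the bases $b^{m}$ by grouping digits, instead of counting words directly, is a cosmetic difference; the Markov/averaging step is the essential one.)

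Your second half does not produce a valid counterexample. With constant runs $I_k$ (all digits equal to a single $Y_k$), any window of length $\lceil K\log\log n\rceil+1$ containing two positions of the same run has two identical coordinates, so its joint law is not a product of uniforms; inserting buffer segments cannot fix this, because windows lying wholly inside a long run still see repeated digits, and you need $|I_k|$ to grow (you suggest $2^{k}$) in order to spoil the Ces\`aro average, which makes such windows unavoidable. The ``direct check'' you appeal to is therefore false, and the tension you yourself flag is not resolved by your sketch. The paper's construction is different: on the dyadic block $[2^{j},2^{j+1})$ it repeats periodically a freshly drawn i.i.d.\ pattern $Z_{j,0}\cdots Z_{j,2Kr-1}$ whose period $2Kr$, with $r\approx \tfrac12\log_2\log_2 n$, just exceeds the required window length, so every window consists of \emph{distinct} pattern coordinates and \eqref{disjoint} genuinely holds, while the digit frequency on the whole block is an average of only $O(\log\log n)$ independent indicators. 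A binomial lower-tail estimate, with $\varepsilon$ chosen small in terms of $K$, then gives deviation probability at least $j^{-1/2}$; these events are independent in $j$, so the second Borel--Cantelli lemma applies, and since each dyadic block carries a non-negligible proportion of all digits, simple normality fails almost surely. None of this is recoverable from constant runs, so the core idea of your negative part has to be replaced, not merely tightened.
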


Note that the theorem gives an optimal condition for the degree of
independence which is necessary to have normality for ``typical''
numbers. From the proof of Theorem~\ref{thm:theorem_D} it is visible that
for the correctness of the conclusion of the first part of the theorem it
is not necessary that~\eqref{indep} holds for \emph{all} (sufficiently
large) $n$, but that it is possible to allow a set of exceptional indices
which has to be assumed to have small density in some appropriate
quantitative sense. However, to keep the presentation short we do not state
the theorem in such generality.

Theorem~\ref{thm:normal} says that $\mu$-almost all sequences are 
mapped by  the  Toeplitz transform  $\tau_P$ to normal ones.
A natural question is  whether  all the  normal sequences are mapped by   $\tau_P$ to normal ones.
The following example shows that this is not the case. 
Let  $P = \{2\}$  and let $x = a_1a_2a_3 \cdots$ be a normal sequence such that $a_{2n} = a_n$ for
each $n \ge 1$.
Then, $\tau_P(x) = d_1d_2d_3\cdots$ satisfies
for each $n \ge 1$, $d_{2n} = d_n$ and $d_{2n-1} = a_n$.  Combining these
relations we obtain $d_{4n-2} = d_{2n-1} = a_n = a_{2n} = d_{4n-1}$, which
proves that $\tau_P(x) $ is not normal.  This example actually shows that
applying twice the transform~$\tau_P$ to a normal sequence never yields a
normal sequence.  The last theorem of this paper,
Theorem~\ref{thm:toeplitz}, gives a characterization of those normal
sequences whose Toeplitz transform is also normal.

We refer to finite sequences  of symbols in $\{0,\ldots , b-1\}$ as words.
If $u$ is a word  we write $|u|$  to denote its length.

% \begin{theorem} \label{thm:toeplitz}
% Let $r$ be a  positive integer and let  $P=\{p_1,\ldots, p_r\}$ 
% be a set  of $r$ primes.
%  For a sequence~$x$ in $\{0,\ldots , b-1\}^\omega$,  
%  the following conditions are equivalent:
%   \begin{enumerate}[(I)] \itemsep=0cm
%   \item \label{toeplitz-i}
%     The Toeplitz transform $\tau_P(x)$ of~$x$ is normal to base $b$.
 
%   \item \label{toeplitz-ii}
% \note{
%     For every integer~$k \ge 0$ and every family  of words
%     $u_{i_1, \ldots,     , i_r}$ for $0 \le i_1,  \ldots, i_r \le k$ of arbitrary length, the limit }
%     \begin{displaymath}
%       \lim_{N\to\infty} \frac{1}{N}\#\left\{ n :
%       \begin{array}{l}
%         1\leq n \le N, \text{ and for all } 0 \le i_1,  \ldots, i_r \le k,\\
%         u_{i_1,\ldots,i_r} \text{ occurs in $x$ at position
%                                $(p_1-1)p_1^{i_1}\cdots (p_r-1)p_r^{i_r}n$}
%       \end{array} 
%       \right\}
%     \end{displaymath}
%     is equal to $b^{ -\sum_{0 \le i_1, \ldots, i_r \le k} |u_{i_1, \ldots, {i_r} }|}$.
%     % \begin{displaymath}
%     % b^{ -\sum_{0 \le i_1, \ldots, i_r \le k} |u_{i_1, \ldots, {i_r} }|}.
%     % \end{displaymath}
% \end{enumerate}
% \end{theorem}

For simplicity, the next theorem is only stated and proved when the
cardinality of the set~$P$ of prime numbers is~$2$, but it can be
generalized to any finite set $P = \{ p_1, \ldots, p_r\}$ of prime numbers.
\begin{theorem} \label{thm:toeplitz}
  Let $P = \{p_1, p_2 \}$ where $p_1$ and $p_2$ are two prime numbers.  For
  a sequence~$x$ in $\{0,\ldots , b-1\}^\omega$, the following conditions
  are equivalent:
  \begin{enumerate}[(I)] \itemsep=0cm
  \item \label{toeplitz-i}
    The Toeplitz transform $\tau_P(x)$ of~$x$ is normal to base $b$.
 
  \item \label{toeplitz-ii}
    For every integer~$k \ge 0$ and every family
    $(u_{i_1,i_2})_{0 \le i_1,i_2 \le k}$ of words of arbitrary length, the
    limit
    \begin{displaymath}
      \lim_{N\to\infty} \frac{1}{N}\#\left\{ n :
      \begin{array}{l}
        1\leq n \le N, \text{ and for all } 0 \le i_1,i_2 \le k,\\
        u_{i_1,i_2} \text{ occurs in $x$ at position $(p_1-1)(p_2-1)p_1^{i_1}p_2^{i_2}n$}
      \end{array} 
      \right\}
    \end{displaymath}
    is equal to $b^{ -\sum_{0 \le i_1,i_2 \le k} |u_{i_1, i_2}|}$.
\end{enumerate}
\end{theorem}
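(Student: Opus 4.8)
\noindent\emph{Plan.} Write $Q=p_1p_2$, $\phi(Q)=(p_1-1)(p_2-1)$, and for $s$ coprime to $Q$ put $\mathrm{rk}(s)=\#\{1\le s'\le s:\gcd(s',Q)=1\}$; thus if $j_1<j_2<\cdots$ enumerates the integers coprime to $Q$ then $\mathrm{rk}(j_r)=r$. Recall that $t:=\tau_P(x)$ satisfies $t_n=x_{\delta(n)}$ with $\delta$ depending only on the $P$-free part of $n$, hence $x_r=t_{j_r}$, and that $t_m=t_{mp_1}=t_{mp_2}$ since $t\in T_P$. The backbone of the argument is a \emph{dictionary} between positions of $x$ and of $t$. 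Since $[1,Q\ell]$ contains exactly $\ell\phi(Q)$ integers coprime to $Q$ and $Q\ell-1$ is the largest one, $j_{\phi(Q)\ell}=Q\ell-1$; and since $j_{\phi(Q)\ell}+s=Q\ell-1+s$ is coprime to $Q$ precisely when $s-1$ is, the $q$-th coprime integer after $j_{\phi(Q)\ell}$ is $Q\ell-1+(j_q+1)$. Taking $\ell=p_1^{i_1}p_2^{i_2}n$, multiplying by $p_1^{\,k-i_1}p_2^{\,k-i_2}$ and using $t_m=t_{mp_i}$ to replace $j_r$ by $j_rp_1^{\,k-i_1}p_2^{\,k-i_2}$, one obtains, for every $k\ge0$, every $0\le i_1,i_2\le k$, every $n\ge1$ and every $q\ge0$, writing $\Delta:=Qp_1^{k}p_2^{k}$,
\[
x_{\,\phi(Q)p_1^{i_1}p_2^{i_2}n+q}=t_{\,\Delta n+h(i_1,i_2,q)},
\]
where $h(i_1,i_2,0)=-p_1^{\,k-i_1}p_2^{\,k-i_2}$ and $h(i_1,i_2,q)=j_q\,p_1^{\,k-i_1}p_2^{\,k-i_2}$ for $q\ge1$. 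Over all $0\le i_1,i_2\le k$ and $0\le q<|u_{i_1,i_2}|$ the integers $h(i_1,i_2,q)$ are pairwise distinct (recover $(i_1,i_2)$ from the $p_1$- and $p_2$-adic valuations of $h$, then $q$ from $h/(p_1^{\,k-i_1}p_2^{\,k-i_2})$; the sign separates $q=0$ from $q\ge1$) and all lie in an interval of length $\Lambda$ independent of $n$. Hence there is a fixed partial word $\pi$ of length $\Lambda$ (prescribing $(u_{i_1,i_2})_q$ at the appropriate places, wildcards elsewhere) such that the event in condition~(II) for a given $n$ is exactly the event that the length-$\Lambda$ block of $t$ at position $\Delta n-p_1^{k}p_2^{k}$ matches $\pi$. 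Thus the frequency appearing in~(II) is the density in $n$ of this event, which constrains $d:=\sum_{i_1,i_2}|u_{i_1,i_2}|$ symbols of $t$ inside an $n$-independent window placed along the progression $\{\Delta n\}$.

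For (I)$\Rightarrow$(II): if $t$ is normal to base $b$ then, by the definition of normality and the shift-invariance of normality (Pillai's criterion), $t$ is simply normal to base $b^{M}$ after every shift, for every $M$; that is, for every $M$, every offset $c$ and every word $v$ of length $M$, the density over $n'$ of $\{n':\,t_{Mn'+c+1}\cdots t_{Mn'+c+M}=v\}$ equals $b^{-M}$. Choosing $M$ a multiple of $\Delta$ with $M\ge\Lambda$, writing $\{\Delta n\}$ as a union of $M/\Delta$ arithmetic progressions of step $M$, summing the previous densities over the $b^{M-d}$ words $v$ that extend $\pi$, and averaging over the $M/\Delta$ progressions, the density from the previous paragraph equals $\tfrac{\Delta}{M}\cdot\tfrac{M}{\Delta}\cdot b^{-d}=b^{-d}$; in particular the limit exists. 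This is exactly~(II).

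For (II)$\Rightarrow$(I): fix a word $w$ of length $L$; we must show it occurs in $t$ with frequency $b^{-L}$. Fix a large $K$ and group the positions of $t$ by residue mod $p_1^{K}p_2^{K}$; call a residue $\rho$ good if both $p_i$-adic valuations of $\rho+j$ are $<K$ for $0\le j<L$, the bad residues having density $O\!\big(L(p_1^{-K}+p_2^{-K})\big)$. For good $\rho$ and $m=\rho+p_1^{K}p_2^{K}r$, the Toeplitz relations together with the same "full period'' count give $t_{m+j}=x_{\gamma_j+\beta_j r}$ for $0\le j<L$, with $\beta_j=\phi(Q)p_1^{\,K-1-e_1(\rho+j)}p_2^{\,K-1-e_2(\rho+j)}$ and $\gamma_j=\mathrm{rk}\!\big((\rho+j)/(p_1^{e_1(\rho+j)}p_2^{e_2(\rho+j)})\big)$, where $e_i$ denotes the $p_i$-adic valuation; moreover, for all but finitely many $r$ the $L$ sampled positions of $x$ are distinct, so the block $t_m\cdots t_{m+L-1}$ is just these $x$-symbols listed in order. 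Grouping the indices $j$ by the value of $\beta_j$ (each equals $\phi(Q)p_1^{i_1}p_2^{i_2}$ for a unique $0\le i_1,i_2\le K-1$) and noting that within a group the $\gamma_j$ run over consecutive integers, the frequency of $w$ among blocks of $t$ starting in class $\rho$ equals the density in $r$ of an event of the form ``$v_{i_1,i_2}$ occurs in $x$ at position $\phi(Q)p_1^{i_1}p_2^{i_2}r+\delta_{i_1,i_2}$ for all $(i_1,i_2)$'' with fixed shifts $\delta_{i_1,i_2}\ge1$ and $\sum_{i_1,i_2}|v_{i_1,i_2}|=L$ (set $v_{i_1,i_2}$ to the empty word for the unused indices, with $k=K-1$). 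Since~(II) for such shifted positions follows from~(II) by expanding ``occurs at position $P+\delta$'' as a disjoint union over the $b^{\delta}$ possible words on $[P,P+\delta)$, this density is $b^{-L}$; summing over the good residues, dividing by $p_1^{K}p_2^{K}$ and letting $K\to\infty$ to absorb the bad part shows that $w$ occurs in $t$ with frequency $b^{-L}$, so $t$ is normal to base $b$.

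The part that will require the most care is the dictionary: pinning down the exact arithmetic linking ``position in $x$'', ``rank among integers coprime to $Q$'' and ``position in $\tau_P(x)$'' — in particular the identities $j_{\phi(Q)\ell}=Q\ell-1$ and the passage to $q\ge1$, and above all the claim that the offsets $h(i_1,i_2,q)$ are distinct and confined to an $n$-independent window, which is precisely what makes the left-hand side of~(II) a genuinely local statement about $t$. The remaining ingredients are routine bookkeeping: in the first implication, the reduction of normality to simple normality to base $b^{M}$ at all offsets; in the second, the control of the exceptional residues and the verification that the sampled positions of $x$ eventually become distinct.
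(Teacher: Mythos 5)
Your proposal is correct, and it reorganizes the argument in a way that differs noticeably from the paper's proof, although the combinatorial core is the same. Your ``dictionary'' $x_{\phi(Q)p_1^{i_1}p_2^{i_2}n+q}=t_{\Delta n+h(i_1,i_2,q)}$ (with $\Delta=p_1^{k+1}p_2^{k+1}$) is a positionwise restatement of the paper's key Lemma~\ref{lem:toeplitz-key}, which phrases the same correspondence blockwise via the deletion map $\rho_J$ and a fixed permutation $\sigma$; the identities you flag as delicate ($j_{\phi(Q)\ell}=Q\ell-1$, the passage to $q\ge1$, distinctness of the offsets $h(i_1,i_2,q)$ via $p_1$- and $p_2$-adic valuations, and the consecutiveness of the ranks within each valuation class) all check out. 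Where you genuinely diverge is in how normality is exploited. For (\ref{toeplitz-i})$\Rightarrow$(\ref{toeplitz-ii}) the paper passes to the deleted sequence $y=\rho_J(w_1)\rho_J(w_2)\cdots$, invokes Wall's theorem that selection along periodic positions preserves normality, and then needs Lemma~\ref{lem:shorter} to go from the special lengths produced by the rearrangement to arbitrary lengths; you instead stay with $\tau_P(x)$, encode the event of (\ref{toeplitz-ii}) as a fixed partial word in an $n$-independent window along the progression $\{\Delta n\}$, and use aligned $b^M$-block frequencies at arbitrary offsets, which handles arbitrary lengths at once. For (\ref{toeplitz-ii})$\Rightarrow$(\ref{toeplitz-i}) the paper counts aligned blocks of the special lengths $(p_1p_2)^{\ell}$, controls the $p_1^{k+1}+p_2^{k+1}-1$ ``spoiled'' blocks per period, and concludes via Long's theorem; you compute the overlapping frequency of an arbitrary word directly by splitting starting positions into good and bad residues mod $p_1^Kp_2^K$ and letting $K\to\infty$, which avoids Long's theorem but uses the standard all-blocks characterization of normality. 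Your offset-prepending step is exactly the content of the paper's Lemma~\ref{lem:offsets}. The trade-off: the paper's route is modular and leans on classical selection results so that the combinatorics is isolated in one lemma, while yours is more self-contained and uniform in the word lengths, at the price of heavier arithmetic bookkeeping (the rank computations and the good/bad residue analysis), which you have correctly identified as the places requiring the most care.
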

Note that Condition~(\ref{toeplitz-ii}) with $k=0$ just states that the
sequence~$x$ itself must be normal.  This is indeed required because the
symbols in~$\tau_P(x)$ at positions not divisible by~$p_1$ and~$p_2$ are
exactly those in~$x$, in the same order.  Condition~(\ref{toeplitz-ii}) can
be viewed as a sort of asymptotic probabilistic independence between words
occurring at positions of the form~$(p_1-1)(p_2-1)p_1^{i_1}p_2^{i_2}n$.  It
states that the asymptotic frequency of a family of words is exactly
the product of their frequencies.

We hope that Theorem \ref{thm:toeplitz} will help to find a construction of
a normal sequence in $T_P$ for some general finite set $P$ of primes.  A
construction of one explicit normal sequence in $T_P$ for $b=2$ and the
special case $P=\{2\}$ appears in~\cite{BecherCarton18}
and~\cite{BecherCartonHeiber18}.  This construction can be generalized to
any integer base $b$ and any singleton~$P$.  
\bigskip

The remainder of the paper is devoted to the proofs of
Theorems~\ref{thm:normal} to~\ref{thm:toeplitz}.

\section{Proof of Theorem~\ref{thm:normal} } \label{sec:normal}

We prove Theorem~\ref{thm:normal} by showing that it is a consequence of
Theorem~\ref{thm:theorem_D}, together with number-theoretic results of
Tijdeman~\cite{Tijdeman73}. 
Let $r$ be a positive integer and
let~$P= \{p_1, \ldots , p_r\}$ be a set of $r$ primes. We define the sets
$K$ and $L$ as
\begin{displaymath}
  K = \{ p_1^{e_1} \cdots p_r^{e_r} : e_i \ge 0 \} % \setminus \{1\}
  \quad\text{and}\quad
  L = \{ \ell : p_i \notdivides\ell,  i=1,\ldots, r\}.
\end{displaymath}
Thus, every positive integer~$n$ can be written in a unique way as
$n = k\ell$ for some $k \in K$ and $\ell \in L$.  We define a equivalence
relation~$\sim$ on the set of positive integers by writing $n \sim n'$
whenever there are $k,k' \in K$ and $\ell \in L$ such that $n = k\ell$ and
$n' = k'\ell$.

\begin{lemma} \label{lem:Tijdeman}
  There exists an integer~$n_0$ such that if $n' \sim n$ and $n'>n>n_0$, then
  $n'-n > 2\sqrt{n}$.
\end{lemma}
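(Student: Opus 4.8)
The plan is to reduce the statement to a statement about gaps between elements of the set $K$ of $P$-smooth numbers (numbers whose prime factors all lie in $P$), and then invoke a theorem of Tijdeman on such gaps. First I would observe that if $n \sim n'$ with $n < n'$, then writing $n = k\ell$ and $n' = k'\ell$ with $k, k' \in K$, $\ell \in L$, and $k < k'$, we have $n' - n = \ell(k' - k) \ge k' - k \ge \ell$-times the minimal gap between consecutive elements of $K$ above $k$. So it suffices to bound below the gap $k' - k$ for consecutive elements of $K$ in terms of the smaller one.

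The key input is Tijdeman's result (from \cite{Tijdeman73}): there is a constant $c = c(P) > 0$ and a number $k_1$ such that for any two consecutive elements $k < k'$ of $K$ with $k > k_1$, one has $k' - k \ge k / (\log k)^{c}$. This is exactly the quantitative ``gap condition'' alluded to in the discussion following Theorem~\ref{thm:normal}, and it is proved using Baker's theory of linear forms in logarithms. Given this, for $n \sim n'$ with $n < n'$ we get
\[
  n' - n = \ell(k' - k) \ge \ell \cdot \frac{k}{(\log k)^c} = \frac{n}{(\log k)^c} \ge \frac{n}{(\log n)^c},
\]
using $k \le n$. Since $n/(\log n)^c$ grows faster than $2\sqrt n$, there is an $n_0$ beyond which $n' - n > 2\sqrt n$, which is the claim. (In fact any bound of the shape $k' - k \gg k^{1/2 + \varepsilon}$ for consecutive $P$-smooth numbers would suffice here, so one does not need the full strength of the logarithmic gap; but Tijdeman's theorem is the natural citation.)

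The main obstacle is entirely in the number-theoretic input, i.e.\ justifying the lower bound $k' - k \ge k/(\log k)^c$ for consecutive $P$-smooth numbers; everything else is bookkeeping. When $P$ is a singleton this is trivial since $K$ is a geometric progression and consecutive gaps are comparable to $k$ itself. When $|P| \ge 2$ it is a genuine consequence of Baker's bounds: writing $k = \prod p_i^{e_i}$ and $k' = \prod p_i^{f_i}$, the quantity $\log(k'/k) = \sum (f_i - e_i)\log p_i$ is a nonzero linear form in logarithms of fixed algebraic numbers with integer coefficients bounded polynomially in $\log k$, so Baker's theorem gives $\log(k'/k) \gg (\log k)^{-c}$, hence $k' - k \ge k(\exp(\log(k'/k)) - 1) \gg k/(\log k)^c$. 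Since Tijdeman's paper \cite{Tijdeman73} states precisely this, I would simply cite it and then carry out the short deduction above.
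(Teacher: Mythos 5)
Your overall route is the same as the paper's: decompose $n=k\ell$, $n'=k'\ell$ with $k,k'\in K$ and $\ell\in L$, and feed Tijdeman's lower bound for gaps in $K$ into $n'-n=(k'-k)\ell$. But as written there is a gap in your displayed chain of inequalities. You state Tijdeman's theorem with a threshold (for consecutive $k<k'$ in $K$ with $k>k_1$), and then apply $k'-k\ge k/(\log k)^c$ to \emph{every} pair $n\sim n'$ with $n$ large. Largeness of $n=k\ell$ does not force $k$ to be large: taking $\ell\in L$ huge and $k=1$ or $k=p_1$ gives arbitrarily large $n$ with bounded $k$, and for $k=1$ the bound $k/(\log k)^c$ is not even meaningful. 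So choosing $n_0$ large does not place you in the regime where your key input applies, and the conclusion $n'-n\ge n/(\log n)^c$ is unjustified exactly for those pairs; since the lemma quantifies over all $n'\sim n$ with $n'>n>n_0$, this case must be covered.

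The repair is one line, and it is precisely the second case in the paper's proof: when $k$ is bounded (say $k\le k_1$, the paper's $k<k_0$) one only needs $k'-k\ge 1$, so $n'-n\ge\ell=n/k\ge n/k_1>2\sqrt{n}$ as soon as $n>4k_1^{2}$; Tijdeman's theorem is needed only in the complementary case of large $k$, where your computation (or the paper's simpler observation that $k/(\log k)^{C}>2\sqrt{k}$ eventually, hence $n'-n>2\sqrt{k}\,\ell\ge 2\sqrt{n}$) goes through. With that case split added your argument coincides with the paper's; your sketch of deducing the gap bound from Baker's theorem is background that the paper handles by simply citing \cite{Tijdeman73}.
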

\begin{proof}[Proof of Lemma~\ref{lem:Tijdeman}]
In~\cite{Tijdeman73} Tijdeman proved that there exists a positive constant~$C$ 
such that for all  $k,k' \in K$ satisfying $k < k'$ we have
\[
k'-k > k/(\log k)^C.
\]
There also exists an integer $k_0$ such that
\[
k/(\log k)^C > 2\sqrt{k}
\]
for all $k \geq k_0$.   %   Before it  said :    for all $k > k_0$.
 Let $n_0 = 4k_0^2$.  Suppose that $n' = k'\ell$ and $n = k\ell$ for
  $k,k'\in K$ and $\ell \in L$.
  If $k \ge k_0$, then \[
n'-n = (k'-k)\ell > 2\sqrt{k}\ell > 2\sqrt{n}.
\]
  If $k < k_0$ and $n = k\ell > 4k_0^2$, then $\ell > 4k_0$.  It follows
  that 
\[
n'-n = (k'-k)\ell > \ell > 2\sqrt{\ell}\sqrt{k_0} > 2\sqrt{n}.
\]
\end{proof}

\begin{proof}[Proof of Theorem~\ref{thm:normal}]
  Let $x = 0.a_1a_2a_3 \dots$ be a real number and let
  $\tau_P(x) = 0.t_1 t_2 t_3 \dots$ be its Toeplitz transform. As noted in
  the introduction, $t_n(x)$ is a measurable function from
  $([0,1],\mathcal{B}(0,1),\lambda)$ to $\{0, \dots, b-1\}$ for all
  $n$. Clearly $t_n$ has uniform distribution on $\{0, \dots, b-1\}$, since
  $t_n(x) = a_{\delta(n)}$ and the digit $a_{\delta(n)}$ takes all possible
  values with equal probability with respect to Lebesgue
  measure. Lemma~\ref{lem:Tijdeman} can be rephrased as saying that for all
  sufficiently large $n$ all the numbers
$$
\delta(n) , \delta(n+1), \dots, \delta(n + \lfloor 2 \sqrt{n} \rfloor)
$$
are different, since $n \sim n'$ holds if and only if
$\delta(n) = \delta(n')$. Thus, for all sufficiently large $n$, the random
variables
$$
a_{\delta(n)}, a_{\delta(n+1)}, \dots, a_{\delta(n + \lfloor 2 \sqrt{n} \rfloor)}
$$
are mutually independent with respect to $\lambda$, since different digits
of a real number are mutually independent with respect to Lebesgue measure
(the digits are Rademacher random variables; their independence with
respect to $\lambda$ was first observed by Steinhaus,
see~\cite{ai}). However, this is the same as saying that
$$
t_n, t_{n+1}, \dots, t_{n + \lfloor 2 \sqrt{n} \rfloor}
$$
are mutually independent for sufficiently large $n$, with respect to
$\lambda$.  Thus, we see that the assumptions of
Theorem~\ref{thm:theorem_D} are satisfied, and thus for $\lambda$-almost
all input values $x$, the number $0.t_1 t_2 t_3 \dots = \tau_P(x)$ is normal
to base $b$. 
From the way $\mu$ is obtained from $\lambda$, this is
equivalent to saying that $\mu$-almost all sequence in $T_P$ are normal to
base $b$, which proves Theorem~\ref{thm:normal}.
\end{proof}

\section{Proof of Theorem~\ref{thm:schmidt}}

%We need to show that $\mu$-almost all elements of $T_P$ are the expansion in base $b$ of an absoluteley normal number. 
Fix the integer  $b \geq 2$  and $P=\{2\}$. 
We need to show that for all integers $r \geq 2$,
 $\mu$-almost all elements of $T_P$ are the expansion of a number that is normal to base~$r$.
As usual, we say that two positive integers are {\em multiplicatively dependent} if 
one is  a rational power of the other.
In case $b$ and $r$ are multiplicatively dependent,  Theorem \ref{thm:schmidt} 
follows immediately from  Theorem \ref{thm:normal} because normality to base $b$ is equivalent to normality 
to any multiplicatively dependent base~$r$.

In case $r$ is multiplicatively independent to $b$ the main structure of our proof follows the work of Cassels
in~\cite{Cassels1959}, but adapted to the uniform measure on the real
numbers whose expansion is in $T_P$. We need two lemmas. The first one,
Lemma~\ref{lemma:cosine}, is similar to
Schmidt's~\cite[Hilfssatz~5]{Schmidt1961}, except that in our case the
product is taken only over the odd integers.  The second one,
Lemma~\ref{lemma:schmidt}, bounds the $L^2(\mu)$ norm of the appropriate
exponential sums.

We start by introducing some notation.
For $v=v_1v_2 \ldots$ in $T_P$ let $x_v$ be the real number  in the unit interval defined by
\begin{equation} \label{def:xv}
x_v=\sum_{j\geq 1} b^{-j} v_j.
\end{equation}
We write   $T_P(\ell)$ for the set of  sequences  of length $\ell$ that are 
initial segments  of elements in the  Toeplitz set 
 $T_P$ for $P=\{2\}$, that is,
\[
T_P(\ell)= \Big\{  a_1 a_2\ldots a_\ell \in \{0, \ldots, b-1\}^\ell :  a_n=a_{2n}  \text{ for each } 1\leq n\leq \ell/2 \Big\}.
\]
Similar to \eqref{def:xv}, for  $v=v_1v_2 \ldots v_\ell$ in $T_P(\ell)$ we  let $x_v$ be 
\[
x_v=\sum_{j= 1}^\ell b^{-j} v_j.
\]
%The statement of the following lemma uses the notion of \emph{multiplicative independence}. Two positive integers are said to be multiplicatively independent if 
%one is not a rational power of the other.

\begin{lemma}\label{lemma:cosine}
 Let $r$ and $b$ be multiplicatively independent positive integers.  There is a
  constant $c>0$, depending only on $r$ and $b$, such that for all positive
integers $J$ and $L$ with~$L\geq b^{J}$, and for every positive integer $N$,
\begin{equation}
\nonumber
  \sum_{n=0}^{N-1}
\prod_{\substack{q=J+1\\q\text{ odd}}}^\infty 
\left( \frac{1}{b} + \frac{b-1}{b} \left|\cos\left(\pi r^n  L  b^{-q }\right)\right| \right)
\leq 2 N^{1-c}.    
  \end{equation}
\end{lemma}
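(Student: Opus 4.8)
The plan is to bound the product by a function that allows summation over $n$, and to exploit multiplicative independence of $r$ and $b$ to show that the factors cannot all be close to $1$ for too many consecutive values of $q$. First I would use the elementary estimate $\frac1b + \frac{b-1}{b}|\cos\theta| \le \exp\!\bigl(-c_0 \|\,\theta/\pi\,\|^2\bigr)$ for some constant $c_0 = c_0(b)>0$, where $\|\cdot\|$ denotes the distance to the nearest integer; this is valid because $1 - |\cos(\pi t)| \gg \|t\|^2$. Hence the product over odd $q > J$ is at most $\exp\!\bigl(-c_0 \sum_{q>J,\ q\text{ odd}} \|\,r^n L b^{-q}\,\|^2\bigr)$, and the task reduces to showing that for each $n$ the exponent sum $S(n) := \sum_{q>J,\ q\text{ odd}} \|\,r^n L b^{-q}\,\|^2$ is bounded below by a constant on a large proportion of $n \in \{0,\dots,N-1\}$ — more precisely, that $\sum_{n=0}^{N-1} \exp(-c_0 S(n)) \le 2N^{1-c}$.

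The key number-theoretic input is that, because $r$ and $b$ are multiplicatively independent, $r^n$ cannot be too well approximated by rationals with denominator a power of $b$: writing $r^n$ in base $b$, its digit expansion is not eventually periodic in a degenerate way, and concretely there is a constant such that among any block of consecutive base-$b$ digit positions of $r^n$, a positive fraction are such that the corresponding "tail" $\{r^n L b^{-q}\}$ stays away from $0$ and $1$. The clean way to package this is the classical observation (going back to Cassels and Schmidt) that the sequence of base-$b$ digits of $r^n$, as $n$ varies, behaves like an independent sequence in an averaged sense: for a fixed range of $q$ roughly of length $\asymp \log_b(r^n) \asymp n$, the vector of digits at those positions is "spread out" enough that $S(n)$ is $\gg $ (length of range) $\gg n$ for all but a sparse set of $n$. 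Since the product only involves odd $q$, exactly half the available positions are used, but this only changes constants. Then $\sum_{n<N} e^{-c_0 S(n)}$ is dominated by a geometric-type sum plus the contribution of the exceptional $n$, both of which are $O(N^{1-c})$ for suitable $c>0$, with the factor $2$ absorbing the small-$n$ terms where the asymptotics have not yet kicked in.

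The main obstacle I expect is making the lower bound $S(n) \gg n$ genuinely uniform and quantitative rather than merely "for almost all $n$": one needs that $\|\,r^n L b^{-q}\,\|$ is bounded away from $0$ for a positive proportion of $q$ in the relevant range, \emph{with the proportion independent of $n$}, and this is precisely where Baker-type or Cassels–Schmidt-type estimates on $\|r^n/b^q\|$ (equivalently, on the distribution of the base-$b$ digits of powers of $r$) are required; the hypothesis $L \ge b^J$ is used here to guarantee that the truncation level $J$ does not interfere, so that the relevant digit positions of $r^n L$ are "fresh". The passage from "positive proportion of $q$" to the bound $2N^{1-c}$ on the $n$-sum is then a routine large-deviation / counting argument: the set of $n$ for which $S(n) < \varepsilon n$ has size $O(N^{1-c})$, and on its complement $e^{-c_0 S(n)} \le e^{-c_0 \varepsilon n}$ sums to $O(1)$. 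I would organize the write-up so that the number-theoretic lemma (uniform lower bound on the count of "good" digit positions of $r^n$) is isolated first, and then the summation over $n$ is a short deduction.
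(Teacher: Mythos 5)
Your reduction step is fine (the bound $\tfrac1b+\tfrac{b-1}{b}|\cos(\pi t)|\le \exp(-c_0\|t\|^2)$ is correct), but the heart of your plan is a genuine gap: everything hinges on the claim that $S(n)=\sum_{q>J,\,q\text{ odd}}\|r^nLb^{-q}\|^2\gg n$ for all $n<N$ outside an exceptional set of size $O(N^{1-c})$, and you offer no proof of this beyond the heuristic that the base-$b$ digits of $r^n$ ``behave like an independent sequence'' plus a reference to ``Baker-type or Cassels--Schmidt-type estimates.'' That claim is essentially equivalent to the lemma itself, and it is not an off-the-shelf fact. For an individual $n$ nothing like $S(n)\gg n$ is known (it is not even known, for instance, that $2^n$ has a positive proportion of nonzero decimal digits), and Baker's theory of linear forms in logarithms only controls $\|r^nLb^{-q}\|$ for $q$ near the top of the range, i.e.\ the leading digits, so it cannot produce a positive proportion of ``good'' digit positions. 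The only available statement is an averaged one over $n$, and establishing it with an $N^{1-c}$ exceptional set is precisely Schmidt's Hilfssatz~5: a delicate deterministic counting of digit patterns among the numbers $r^nL$, $n<N$, exploiting multiplicative independence — not a ``routine large-deviation argument,'' since there is no randomness or independence to invoke. The paper does not reprove this; it observes that Schmidt's proof uses only three properties of $|\cos(\pi x)|$ (periodicity, the bound by $1$, and $|\cos(\pi/s^2)|<1$), all of which hold for $\tfrac1b+\tfrac{b-1}{b}|\cos(\pi x)|$, so the Hilfssatz transfers verbatim to the modified factor.

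A secondary gap is your dismissal of the restriction to odd $q$ as ``only changes constants.'' If all you know is that a positive proportion of positions $q$ in the relevant range are good, those positions could a priori all be even, and then $S(n)$ restricted to odd $q$ gains nothing; passing from a position $q$ to $q\pm1$ divides or multiplies the argument by $b$ and does not preserve a lower bound on $\|\cdot\|$. The paper handles exactly this point by a change of base: it applies (the modified) Hilfssatz~5 with $s=b^2$, so the exponents are $b^{-2k}$, and then takes $L=bm$ a multiple of $b$, turning $b^{-2k}$ into $b^{-(2k-1)}$, i.e.\ odd exponents of $b$; the hypothesis $L\ge b^J$ is what makes the bookkeeping $K=J/2$, $L\ge(b^2)^K$ work. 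So both the core counting estimate and the odd-$q$ device need to be supplied before your outline becomes a proof.
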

\begin{proof}
  Schmidt's~\cite[Hilfssatz~5]{Schmidt1961} states that for all
  multiplicatively independent integers $r \geq 2$ and $s \geq 2$ there is
  a constant $c_1>0$, depending only on $r$ and $s$, such that for all
  positive integers $K$ and $L$ with
  $L\geq s^{K}$, and for every positive integer $N$,\setcounter{footnote}{1}\footnote{In Schmidt's paper the
    sum is written as $\sum_{r=0}^{N-1}$, but actually the sum is
    $\sum_{n=0}^{N-1}$.} 
\begin{equation*}
\sum_{n=0}^{N-1}  \prod_{k=K+1}^\infty |\cos ( \pi r^n L  s^{-k}) | \leq 2 N^{1-c_1}.
\end{equation*}
When examining the proof of this \emph{Hilfssatz}, one sees that the only
properties of the function $|\cos (\pi x)|$ that are used in the proof are
the periodicity, the fact that $|\cos(\pi x)| \leq 1$, and finally the fact
that $|\cos (\pi/s^2)| < 1$. However, all these properties also hold for
the function
$\frac{1}{p} + \frac{p-1}{p} \left|\cos\left(\pi x\right)\right|$ for any integer $p>1$, so
Schmidt's proof can also be used without any further changes to show that
\begin{equation}\label{schmidt}
\sum_{n=0}^{N-1}  \prod_{k=K+1}^\infty \left( \frac{1}{p} + \frac{p-1}{p} \left|\cos\left(\pi r^n  L  s^{-k }\right)\right| \right) \leq 2 N^{1-c_2},
\end{equation}
for a constant $c_2>0$ depending only on $p$, $r$ and $s$.

Our lemma assumes that $r$ and $b$ are multiplicatively independent, so $r$
and~$b^2$ are multiplicatively independent as well.  Replacing $p$ by $b$ and $s$ by $b^2$
in (\ref{schmidt}), we obtain
\[
\sum_{n=0}^{N-1}  \prod_{k=K+1}^\infty \left( \frac{1}{b} + \frac{b-1}{b} \left|\cos\left(\pi r^n  L  b^{-2k }\right)\right| \right)  \leq 2N^{1-c_3},
\]
for all $K,L$ satisfying $L \geq (b^2)^K$, where the constant $c_3>0$
depends on $r$ and~$b^2$ (which is equivalent to saying that $c_3$ depends
on $r$ and $b$).  In particular this holds for all $L$ which are multiples
of $b$, satisfying $L\geq b^{2K}$. So let us assume that $L$ is a multiple
of $b$, and that accordingly $L = bm$. Then we have
\[
\sum_{n=0}^{N-1}  \prod_{k=K+1}^\infty \left( \frac{1}{b} + \frac{b-1}{b} \left|\cos\left(\pi r^n  m  b^{-(2k-1) }\right)\right| \right) \leq 2N^{1-c_3}.
\]
provided that $bm \geq b^{2K}$. 
Now writing $2k-1=q$  this is
\[
\sum_{n=0}^{N-1}  \prod_{\substack{q=2K+1,\\q \text{ odd}}}^\infty \left( \frac{1}{b} + \frac{b-1}{b} \left|\cos\left(\pi r^n  m  b^{-q}\right)\right| \right) \leq 2N^{1-c_3}.
\]
Finally writing $J+1 = 2K+1$ this is
\[
\sum_{n=0}^{N-1}  \prod_{\substack{q=J+1,\\q \text{ odd}}}^\infty \left( \frac{1}{b} + \frac{b-1}{b} \left|\cos\left(\pi r^n  m  b^{-q}\right) \right| \right) \leq 2N^{1-c_3},
\]
which holds for $b m \geq b^{2K}$, that is (since $K = J/2$) for $b m \geq (b^2)^{J/2}$, 
or equivalently for $m \geq b^{J-1}$. We can relax the final restriction to $m \geq b^J$.
 This proves the lemma.
\end{proof}

As usual, we write $e(x)$ to denote ${\rm e}^{2\pi i x}$.  

\begin{lemma}\label{lemma:schmidt}
  Let $b \geq 2$ be a integer, and assume that $P=\{2\}$. Let $T_P$ be the
  corresponding Toeplitz transform in base $b$, and let $\mu$ be the
  associated measure, as introduced in Section~\ref{sec:intro}. Let
  $r \geq 2$ be an integer 
multiplicatively independent to~$b$.
  Then for all integers $h \geq 1$ there exist constants $c>0$ and
  $k_0 >0$, depending only on $b,r$ and $h$,  such
  that  for all positive integers $k,m$ satisfying
$m \geq k + 1 + 2 \log_r b \geq k_0$,
$$
\int_0^1 \left| \sum_{j=m+1}^{m+k} e(r^j h x) \right|^2 d\mu(x) \leq k^{2-c}.
$$
\end{lemma}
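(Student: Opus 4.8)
The plan is to expand the square, so that
\[
\int_0^1 \left| \sum_{j=m+1}^{m+k} e(r^j h x) \right|^2 d\mu(x)
= \sum_{i,j=m+1}^{m+k} \int_0^1 e\bigl((r^i - r^j) h x\bigr)\, d\mu(x),
\]
and to obtain a good bound on the individual Fourier coefficients $\widehat{\mu}(M) = \int_0^1 e(Mx)\,d\mu(x)$ for integers $M$ of the form $(r^i-r^j)h$ with $m+1 \le i < j \le m+k$. The $k$ diagonal terms contribute exactly $k$, so it suffices to show that each off-diagonal term is $o(1)$ in a way that is summable to something of size $k^{2-c}$; equivalently it suffices to prove $|\widehat{\mu}(M)| \le c' N^{-c}$-type decay, where $N$ is a suitable power appearing in $M$. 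First I would record an explicit product formula for $\widehat{\mu}(M)$. Because $\mu$ is the push-forward under $\tau_P$ of the uniform measure on $A^\omega$ for $P=\{2\}$, the digits of a $\mu$-random real $x_v$ are organized into the equivalence classes described in the introduction: for each odd $s$, the digits at positions $s, 2s, 4s, \dots$ are all equal to a single uniform random variable $U_s$, and the $U_s$ (over odd $s$) are independent. Hence
\[
x_v = \sum_{s \text{ odd}} U_s \sum_{n \ge 0} b^{-s 2^n},
\]
so writing $\beta_s := \sum_{n\ge 0} b^{-s2^n}$ we get, by independence of the $U_s$,
\[
\widehat{\mu}(M) = \prod_{s \text{ odd}} \left( \frac{1}{b} \sum_{d=0}^{b-1} e\bigl(M d \,\beta_s\bigr) \right),
\]
and using the elementary bound $\bigl|\frac1b\sum_{d=0}^{b-1} e(d\theta)\bigr| \le \frac1b + \frac{b-1}{b}|\cos(\pi\theta)|$ (which follows from the triangle inequality on the geometric sum, comparing it to the two extreme terms) we obtain
\[
|\widehat{\mu}(M)| \le \prod_{s \text{ odd}} \left( \frac{1}{b} + \frac{b-1}{b} \bigl|\cos(\pi M \beta_s)\bigr| \right).
\]

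Next I would isolate the dominant part of the product $M\beta_s$ for $M = (r^i-r^j)h$. Fix $i<j$ and set $M = h\,r^i(r^{j-i}-1)$; the key observation is that $M = r^i \cdot L$ where $L := h(r^{j-i}-1)$ is an integer of size at least $r^{j-i}-1 \ge r - 1$, and more importantly we can make $L$ as large as we like by taking $j-i$ large. For each odd $s$ the quantity $M\beta_s = r^i L \sum_{n\ge 0} b^{-s2^n}$; the terms with $s 2^n$ small compared to (a function of) $i$ contribute an integer multiple of $r^i L$ shifted by lower-order tails, and the relevant non-integer part is captured by the terms $b^{-q}$ with $q$ in the arithmetic-progression-like set $\{s 2^n\}$. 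Organizing the product $\prod_{s\text{ odd}}\prod_{n\ge0}$ over pairs $(s,n)$ as a product over all integers $q = s2^n$ (each positive integer $q$ has a unique such representation with $s$ odd), and discarding (bounding by $1$) the finitely many factors with $q$ below a threshold $J$ chosen of size comparable to $i$, we arrive at a bound of exactly the shape
\[
|\widehat{\mu}(M)| \le \prod_{\substack{q = J+1 \\ q \text{ odd}}}^{\infty} \left( \frac{1}{b} + \frac{b-1}{b}\bigl|\cos(\pi r^{i'} L' b^{-q})\bigr| \right)
\]
after absorbing the "discarded" integer part of $r^i L b^{-q}$ for small $q$; here one checks that one may take $L'$ a multiple of the original $L$ satisfying $L' \ge b^J$ provided $j - i$ (hence $\log_r L$) exceeds roughly $J \log_b r \approx i \log_b r$, which is guaranteed by the hypothesis $m \ge k+1+2\log_r b$ once we note $i \le m+k$ and $J \approx m$. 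This is precisely the hypothesis under which Lemma~\ref{lemma:cosine} applies, and summing the resulting bound $2 N^{1-c}$ over the relevant range of $n = i'$ (there are $\le k$ choices of $i$, and for each a corresponding power of $r$) gives the desired $k^{2-c}$ after renaming constants.

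The main obstacle I anticipate is the bookkeeping in the third paragraph: tracking how large $L = h(r^{j-i}-1)$ must be, relative to the threshold $J$ (equivalently, how $J$ must be chosen as a function of $i$ so that the "integer part" really is an integer and so that $L' \ge b^J$), and verifying that the stated quantitative hypothesis $m \ge k+1+2\log_r b \ge k_0$ is exactly what makes this work for all the off-diagonal pairs simultaneously. A secondary subtlety is that Lemma~\ref{lemma:cosine} is stated with the summation variable $n$ as the exponent of $r$ inside the cosine, running over $0,\dots,N-1$; here the exponent $i$ runs over $m+1,\dots,m+k$, so I would factor $r^i = r^m r^{i-m}$ and absorb the fixed factor $r^m$ into $L$ (again legitimate because it only makes $L$ larger), leaving a sum over $n = i-m \in \{1,\dots,k\}$, to which Lemma~\ref{lemma:cosine} applies with $N = k+1$, yielding the bound $2(k+1)^{1-c} \cdot k \le k^{2-c}$ (adjusting $c$ and $k_0$). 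Everything else — the expansion of the square, the independence of the $U_s$, the elementary cosine bound — is routine.
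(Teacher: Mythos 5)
Your overall strategy is the same as the paper's: expand the square, use the independence of the digit classes $\{s,2s,4s,\dots\}$ ($s$ odd) to factor the relevant exponential average into a product of one factor per odd class, bound each factor by $\frac1b+\frac{b-1}{b}|\cos(\pi\,\cdot\,)|$, and then, for each fixed gap $d=j-i$, apply Lemma~\ref{lemma:cosine} with $L$ containing the factor $r^m$ so that the hypothesis $m\ge k+1+2\log_r b$ yields $L\ge b^J$; the final summation over $d$ and the count of constants is also as in the paper. The genuine gap is in your third paragraph, at the claim that the bound takes ``exactly the shape'' $\prod_{q>J,\ q\ \mathrm{odd}}\bigl(\frac1b+\frac{b-1}{b}|\cos(\pi r^{i'}L'b^{-q})|\bigr)$. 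Because you work with the untruncated measure, each class contributes the argument $\pi M\beta_q$ with $\beta_q=b^{-q}+b^{-2q}+b^{-4q}+\cdots$, not $\pi Mb^{-q}$. Any admissible threshold must satisfy $b^{J}\le L\approx hr^{m}$, while $M$ can be as large as $hr^{m+k}$, so for odd $q$ just above $J$ the discrepancy $\pi M(\beta_q-b^{-q})\approx\pi Mb^{-2q}$ is of constant order, and the factors do \emph{not} reduce to the form required by Lemma~\ref{lemma:cosine}; in particular ``absorbing the discarded integer part'' does not describe a valid operation, since none of these quantities is an integer (and the small-$q$ factors are simply dropped, so nothing is absorbed there). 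Relatedly, the remark about reorganizing the product over pairs $(s,n)$ into a product over all integers $q=s2^n$ is not correct: after averaging over the digit of a class you get one factor per odd $s$, with the whole $\beta_s$ inside, and it cannot be refactored over the individual powers $b^{-s2^n}$.

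This is precisely the point where the paper's truncation does real work: one first replaces $x$ by its truncation $\bar x$ to $\ell$ digits with $b^{\ell}\ge hr^{m+k+1}$ (costing only the additive $O(k)$ error \eqref{c1}), and then the class sums $M_q$ are \emph{exactly} $b^{-q}$ for $\ell/2<q\le\ell$, which is the only range retained before extending the product to all odd $q>\ell/2$ at the cost of a constant (the factors with $q>\ell$ are bounded below because there $wb^{-q}\le\tfrac12$). Your version can be repaired either by adopting this truncation, or by adding an explicit comparison estimate: using $|\cos(x+\delta)|\le|\cos x|+|\delta|$ and each factor being $\ge 1/b$, the replacement of $\beta_q$ by $b^{-q}$ costs a multiplicative factor at most $\exp\bigl(Cb\sum_{q>J}Mb^{-2q}\bigr)=\exp\bigl(O(Mb^{-2J})\bigr)$, which is bounded provided $b^{2J}\gtrsim M$; note that this lower constraint on $J$, together with the upper constraint $b^{J}\le L$ needed for Lemma~\ref{lemma:cosine}, is compatible exactly because of the hypothesis $m\ge k+1+2\log_r b$, and it forces a single threshold $J\approx\frac12\bigl((m+k)\log_b r+\log_b h\bigr)$ rather than a per-pair threshold ``comparable to $i$'' as you suggest (a threshold growing with $i$ up to $m+k$ would violate $L\ge b^J$). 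As written, the crucial step is asserted rather than proved, and the sketched justification for it is incorrect.
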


%\begin{lemma}\label{lemma:schmidt}
%  Let $b \geq 2$ be a integer, and assume that $P=\{2\}$. Let $T_P$ be the
%  corresponding Toeplitz transform in base $b$, and let $\mu$ be the
%  associated measure, as introduced in Section~\ref{sec:intro}. Let
%  $r \geq 2$
%\note{multiplicatively independent to~$b$.}
% %be an integer that is not a rational power of~$b$.
%  Then for all integers $h \geq 1$ there exist constants $c>0$ and
%  $k_0 >0$, depending only on $b,r$ and $h$ but not on $m$ and $k$, such
%  that
%$$
%\int_0^1 \left| \sum_{j=m+1}^{m+k} e(r^j h x) \right|^2 d\mu(x) \leq k^{2-c}
%$$
%holds for all positive integers $k,m$ satisfying
%$m \geq k + 1 + 2 \log_r b \geq k_0$.
%\end{lemma}

\begin{proof}
We write $\ell$ for the smallest even integer which is larger than 
$$
((m+k+1) \log_b r) + \log_b h.
$$
Let $x \in [0,1]$ be given, and let $\bar{x}$ be the same number as $x$,
but with the digits in positions $\ell+1,\ell+2,\dots$ after the decimal
point all being replaced by zeros. Then
$|x - \bar{x}| \leq b^{-\ell} \leq h^{-1} r^{-m-k-1}$. Thus, using
derivatives and the inequality $|y^2-z^2| \leq |y+z|\cdot |y-z|$, we have
\begin{eqnarray}
& & \left| \left|\sum_{j=m+1}^{m+k} e(r^j h x)\right|^2 - \left|\sum_{j=m+1}^{m+k} e(r^j h \bar{x})\right|^2 \right| \nonumber\\
& \leq & \left( \left|\sum_{j=m+1}^{m+k} e(r^j h x)\right| + \left|\sum_{j=m+1}^{m+k} e(r^j h \bar{x})\right| \right) \cdot \sum_{j=m+1}^{m+k} \left|e(r^j h x)- e(r^j h \bar{x})\right| \nonumber\\
& \leq & 2 k \sum_{j=m+1}^{m+k} 2 \pi r^j h |x - \bar{x}| \nonumber\\
& \leq & 2 k \sum_{j=m+1}^{m+k} 2 \pi r^j r^{-m-k-1} \nonumber\\
& \leq & c_1 k, \label{c1}
\end{eqnarray}
for some constant $c_1$ depending on $r$ and $h$. Let $x_v \in T_p$.  Then
by construction of the measure $\mu$ we have
$$
\mu \left( \left[x_v, x_v + b^{-\ell} \right) \right) = b^{-\ell/2},
$$
since those $x \in [0,1]$ for which
$\tau_p (x) \in [x_v, x_v + b^{-\ell} )$ form an interval of length
$b^{-\ell/2}$ (recall that we assumed that $\ell$ is even). Together with
\eqref{c1} this implies
\begin{eqnarray*}
& & \int_{[x_v, x_v + b^{-\ell} )} \left| \sum_{j=m+1}^{m+k} e(r^j h x) \right|^2~d\mu(x) \\
& \leq & \int_{[x_v, x_v + b^{-\ell} )} \left| \sum_{j=m+1}^{m+k} e(r^j h x_v) \right|^2 + c_1 k ~d\mu(x)\\
& = & \left(\left| \sum_{j=m+1}^{m+k} e(r^j h x_v) \right|^2 + c_1 k \right) \int_{[x_v, x_v + b^{-\ell} )} ~d\mu(x) \\
& = & b^{-\ell/2} \left( \left| \sum_{j=m+1}^{m+k} e(r^j h x_v) \right|^2 + c_1 k \right).
\end{eqnarray*}
Since
$$
\int_0^1 \left| \sum_{j=m+1}^{m+k} e(r^j h x) \right|^2~d\mu(x) = \sum_{v \in T_P(\ell)} \int_{[x_v, x_v + b^{-\ell} )} \left| \sum_{j=m+1}^{m+k} e(r^j h x) \right|^2~d\mu(x)
$$
we obtain
\begin{equation} \label{task}
\int_0^1 \left| \sum_{j=m+1}^{m+k} e(r^j h x) \right|^2~d\mu(x) \leq c_1 k + b^{-\ell/2} \sum_{v \in T_P(\ell)} \left| \sum_{j=m+1}^{m+k} e(r^j h x_v) \right|^2,
\end{equation}
and the main task for the proof of the lemma will be to estimate the sum on
the right of \eqref{task}.

Let 
\[
A(x,h,r,m,k)=  \left|\sum_{j=m+1}^{{m+k}} e(r^j h x)\right|^2.
\]
Since in the sequel the values $r$ and $h$ are fixed and we will always use
the expression with variables $m$ and $k$ we abbreviate $A(x,h,r,m,k)$ by
$A(x)$. We can rewrite the sum on the right-hand side of \eqref{task} as
\begin{eqnarray}
    \sum_{v\in T_P(\ell)}A(x_v)
& = &\sum_{v\in T_P(\ell)}
   ~ \sum_{j_1=m+1}^{{m+k}}
   ~ \sum_{j_2=m+1}^{{m+k}}
    e((r^{j_2}-r^{j_1})h x_v) \nonumber\\
& \leq & k b^{\ell/2} + 
    2 \sum_{i_1=1}^{k-1} \left| \sum_{v\in T_P(\ell)} \sum_{i_2=0}^{k-i_1-1} e \left(r^{m+1+i_2} (r^{i_1}-1)h x_v \right) \right|, \label{A_est}
\end{eqnarray}
where the term $k b^{\ell/2}$ comes from the contribution of the diagonal
$j_1 = j_2$, and where the summations in line \eqref{A_est} are obtained
from those in the line above by substituting $i_1 = |j_2-j_1|$ and then
first summing over all $j_1,j_2$ for which $|j_2-j_1|$ is fixed.

Since each sequence $v=v_1 v_2 \ldots v_\ell$ in $T_P(\ell)$ satisfies
$v_q=v_{2 q}$ for $q=1,\ldots, \ell/2$, for every integer $w$ we have
\begin{eqnarray}
\label{key}
\sum_{v\in T_P(\ell)}\;\; e(w x_v)&=&\sum_{v\in T_P(\ell)}  e\left(w \sum_{j=1}^\ell  v_j b^{-j}\right)
\\ \nonumber
&=&\sum_{v\in T_P(\ell)}  e\left(w  \sum_{\substack{q= 1 \\ q\text{ odd}}}^\ell v_q   
\left(\sum_{k=0}^{\lfloor \log_2 (\ell/q)\rfloor}  b^{-q 2^k}\right)\right)
\\\nonumber
&=&\sum_{v\in T_P(\ell)}
\prod_{\substack{q= 1 \\ q\text{ odd}}}^{\ell}
\prod_{k= 0}^{\lfloor \log_2 (\ell/q)\rfloor}  e\left(w v_q b^{-q2^k}\right)
\\\nonumber
&=&\prod_{\substack{q= 1 \\ q\text{ odd}}}^{\ell}
\sum_{u=0}^{b-1}
\prod_{k= 0}^{\lfloor \log_2 (\ell/q) \rfloor}  e\Big(u w b^{-q2^k}\Big)
\\
&=&\prod_{\substack{q= 1 \\ q\text{ odd}}}^{\ell}
\sum_{u=0}^{b-1}
  e\big(u w M_q\big), \label{wsums_in}
\end{eqnarray}
where 
\begin{equation*}
\label{eq:M}
M_q=  \sum_{k=0}^{\lfloor\log_2 (\ell / q)\rfloor}  b^{-q 2^k}. 
\end{equation*}

The internal $\sum_{u=0}^{ b-1} e(uy) $ for some real $y$ can be bounded by 
\begin{eqnarray} 
\left| \sum_{u=0}^{ b-1} e(uy) \right|
&=& \left| \sum_{\substack{0\leq u \leq b-2\\ \text{$u$ even}}} e(uy)(1+e(y)) \right| \nonumber\\
& \leq & \frac{b}{2} \Big|1+e(y)\Big| \qquad \textrm{if $b$ is even,} \label{sumb1}
\end{eqnarray}
and
\begin{eqnarray} 
\left| \sum_{u=0}^{ b-1} e(uy) \right| & \leq & 1 + \left| \sum_{\substack{1 \leq u \leq b-2\\ \text{$u$ odd}}} e(uy)(1+e(y)) \right| \nonumber\\
& \leq & 1 + \frac{b-1}{2} \Big|1+e(y)\Big| \qquad \textrm{if $b$ is odd.} \label{sumb2}
\end{eqnarray}
Since $|1 + e(y)|=2 |\cos \left(\pi y\right)|$, and since the term in
\eqref{sumb2} is larger than that in \eqref{sumb1}, for all $b$ we have the
upper bound
$$
%\left| \sum_{u=0}^{b-1}
%   e\big(u w M_q \big) \right| \leq 1 + (b-1) |\cos \left(\pi w  M_q\right)|
\left| \sum_{u=0}^{b-1}
   e\big(u w M_q \big) \right| \leq 1 + (b-1) \big|\cos \left(\pi w  M_q\right)\big|
$$
for the sums appearing in line \eqref{wsums_in}.

Thus for (\ref{key}) we have the estimate
\begin{eqnarray}
\left| \sum_{v\in T_P(\ell)}\;\;e(w x_v) \right| & \leq & b^{\ell/2} 
\prod_{\substack{q= 1 \\ q\text{ odd}}}^{\ell} \left(\frac{1}{b} + \frac{b-1}{b} \left|\cos \left(\pi w  M_q \right) \right|\right) \nonumber\\
& \leq & b^{\ell/2} \prod_{\substack{\ell/2 \leq q \leq \ell, \\ q\text{ odd}}} \left(\frac{1}{b} + \frac{b-1}{b} \left|\cos \left( \pi w  M_q \right)\right|\right) \nonumber\\
& = & b^{\ell/2} \prod_{\substack{\ell/2 < q \leq \ell, \\ q\text{ odd}}} \left( \frac{1}{b} + \frac{b-1}{b} \left|\cos \left( \pi w  b^{-q} \right) \right|\right), \label{will_use}
\end{eqnarray}
where we used the crucial fact that for all $q$ satisfying $\ell/q<2$ we
have $\lfloor\log_2 (\ell / q)\rfloor=0$, and thus $M_q = b^{-q}$.  Note
that we were allowed to simply remove some of the factors when changing
from the first to the second line of the displayed formula, since all
factors are trivially bounded by~$1$.

We will use \eqref{will_use} with $w = r^{m+1+i_2} (r^{i_1}-1) h$, 
with the ranges of $i_1$ and $i_2$ specified in~\eqref{A_est}. 
By our choice of $\ell$ for such $w$ we have
\begin{eqnarray*}
w b^{-\ell} & \leq & r^{m+k} h b^{-\ell} \\
& \leq & r^{m+k} h b^{-((m+k+1) \log_b r) - \log_b h} \\
& \leq & r^{-1} \leq \frac{1}{2}.
\end{eqnarray*}
Thus for such $w$ we have
$$
\prod_{\substack{q > \ell, \\ q\text{ odd}}} \left(\frac{1}{b} + \frac{b-1}{b} \left|\cos \left( \pi w  b^{-q} \right) \right|\right) \geq \prod_{z=1}^\infty \left(\frac{1}{b} + \frac{b-1}{b} \left|\cos \left( \pi 2^{-z} \right) \right|\right) \geq c_2
$$
for some constant $c_2>0$, and thus for such $w$ the expression in line
\eqref{will_use} is bounded by
$$
c_2^{-1} \prod_{\substack{q > \ell/2, \\ q\text{ odd}}} \left(\frac{1}{b} + \frac{1}{b-1} \left|\cos \left(\pi w  b^{-q} \right) \right|\right).
$$

When we plug this estimate into \eqref{A_est} we obtain
\begin{eqnarray}
&& \left| \sum_{v\in T_P(\ell)}A(x_v) \right| \leq k b^{\ell/2} + \nonumber\\
& & \qquad + 2 c_2^{-1} b^{\ell/2} \sum_{i_1=1}^{k-1} \sum_{i_2=0}^{k-i_1-1} \prod_{\substack{q > \ell/2, \\ q\text{ odd}}} \left( \frac{1}{b} + \frac{b-1}{b} \left|\cos \left(\pi r^{m+1+i_2} (r^{i_1}-1) h  b^{-q} \right) \right|\right). \label{prod_esti}
\end{eqnarray}
We apply Lemma~\ref{lemma:cosine} to estimate the sums of products in this
formula, and using the lemma with $L = r^{m+1} (r^{i_1}-1) h$ we obtain
\begin{equation} \label{using_f}
\sum_{i_2=0}^{k-i_1-1} \prod_{\substack{q > \ell/2, \\ q\text{ odd}}} \left( \frac{1}{b} + \frac{1}{b-1} \left| \cos \left(\pi r^{m+1+i_2} (r^{i_1}-1) h  b^{-q} \right) \right|\right) \leq 2 k^{1-c_3}
\end{equation}
for a constant $c_3>0$. Note that for the application of the lemma it was
essential to assure that $L \geq b^{\ell/2}$, which with our choice of $L$
is $r^{m+1} (r^{i_1}-1) h \geq b^{\ell/2}$. However, this is true, since by
the assumption $m \geq k + 1 + 2 \log_r b$ and our choice of
$\ell \leq (((m+k+1) \log_b r) + \log_b h)+2$ we have
\begin{eqnarray}
r^{m+1} (r^{i_1}-1) h & \geq & h r^{m} \nonumber\\
& \geq & h r^{(m+k+1+2 \log_r b)/2} \nonumber\\
& = & h b^{((m+k+1) \log_b r)/2} b^{1} \nonumber\\
& \geq & b^{((m+k+1) \log_b r)/2 + (\log_b h) + 1} \nonumber\\
& \geq & b^{\ell/2}.
\end{eqnarray}
These formulas show where the difficulties come from in our setting, as
compared to Cassels' and Schmidt's work. Since we cannot control those
terms in the product where $M_q$ is complicated, we have to restrict the
product to relatively large values of $q$, where we have the simple
situation that $M_q = b^{-q}$.  However, since for this reason it is
necessary that our product starts at a large value of $q$, in order to be
able to apply Lemma~\ref{lemma:cosine} we have to make sure that the
frequencies (denoted by $L$ in the lemma) are large, which in turn requires
that the summation in Lemma~\ref{lemma:schmidt} cannot start at $1$, but
only at a value of $j$ which is relatively large in comparison with the
summation range $k$.

Using \eqref{using_f} for \eqref{prod_esti}, we obtain
\[
%\left| \sum_{v\in T_P(\ell)}A(x_v) \right| \leq  b^{\ell/2} (k + 2c_2^{-1} k^{2-c_3}).
\left| \sum_{v\in T_P(\ell)}A(x_v) \right| \leq  b^{\ell/2} (k + 4 c_2^{-1} k^{2-c_3}).
\]
Combining this with \eqref{task} we finally obtain
\[
%\int_0^1 \left| \sum_{j=m+1}^{m+k} e(r^j h x) \right|^2~d\mu(x) \leq (c_1 k + k + 2c_2^{-1} k^{2-c_3}) \leq k^{2-c_4}
\int_0^1 \left| \sum_{j=m+1}^{m+k} e(r^j h x) \right|^2~d\mu(x) \leq (c_1 k + k + 4c_2^{-1} k^{2-c_3}) \leq k^{2-c_4}
\]
for a constant $c_4>0$ and all sufficiently large $k$.
\end{proof}

\begin{proof}[Proof of Theorem~\ref{thm:schmidt}]
Let  ${\mathcal R}$ be the set of real numbers in the unit interval whose expansion in base $b$ 
is in $T_P$, 
\[
 {\mathcal R}=\{ x_v :  v\in T_P\}.
\]
Note that by construction $\mu$ is supported on $\mathcal{R}$, so
$\mu([0,1] \backslash \mathcal{R})=0$. Consider an integer $r$
% that is not a rational power of $b$.  
multiplicatively independent to $b$. 
To prove that $\mu$-almost all elements of
$\mathcal{R}$ are normal to base $r$, by Weyl's criterion we have to show
that for $\mu$-almost all $x \in [0,1]$ we have
$$
\lim_{N \to \infty} \frac{1}{N} \sum_{n=1}^N e(r^n h x) = 0
$$
for all integers $h>0$.\\ 

For $k \geq 1$, set $m_k = \left\lceil e^{\sqrt{k}} \right\rceil$. 
Furthermore, we define $M_0 = 0$ and
$$
M_k = m_1 + \dots + m_k, \qquad k \geq 1.
$$
For a fixed positive integer $h$, we define sets 
\[
E_k = \left\{ x \in [0,1]:~ \frac{1}{m_k} \left| \sum_{n=M_{k-1}+1}^{M_k} e(r^n h x) \right| > 1/k \right\}.
\]
%The length of the summation range is $m_k$, and by construction for
%sufficiently large $k$ we have $m_k \geq M_{k-1} + 1 + 2 \log_r b$ (to see
%this, note that $m_k \approx e^{\sqrt{k}}$ and
%$M_k \approx 2 \sqrt{k} e^{\sqrt{k}}$ for large $k$). 
The summation has $m_k$ terms  and, 
 for sufficiently large $k$, we have $$m_k \leq M_{k-1} + 1 + 2 \log_r b.$$
To  see this, notice that $m_k \approx e^{\sqrt{k}}$ and
$ \sqrt{k} e^{\sqrt{k}} < M_k$.
Thus, for all sufficiently large~$k$ we can apply Lemma~\ref{lemma:schmidt}, and by an
application of Chebyshev's inequality we have
\[
\mu(E_k) \leq \frac{k^2}{m_k^{c}},
\]
where $c>0$ is the constant from the conclusion of 
Lemma~\ref{lemma:schmidt}. By the rapid growth 
of the sequence $(m_k)_{k \geq 1}$ this implies
$$
\sum_{k=1}^\infty \mu(E_k) < \infty,
$$
and thus by the first Borel--Cantelli lemma $\mu$-almost surely only
finitely many events $E_k$ occur, so that in particular $\mu$-almost surely
we have
$$
\frac{1}{m_k} \left| \sum_{n=M_{k-1}+1}^{M_k} e(r^n h x) \right| \to 0
\qquad \text{as $k \to \infty$}.
$$
It is easily seen that this also implies that $\mu$-almost surely
$$
\lim_{k \to \infty} \frac{1}{M_k} \left|\sum_{n=1}^{M_k} e(r^n h x) \right| = 0.
$$
Finally, by the sub-exponential growth of $(M_k)_{k \geq 1}$, for all sufficiently large $N$ there is a value of $k$ such that $|N - M_k| = o(N)$. This implies 
\begin{equation} \label{weyl_c}
\lim_{N \to \infty} \frac{1}{N} \left|\sum_{n=1}^{N} e(r^n h x) \right| = 0, \qquad \text{$\mu$-almost surely}.
\end{equation}
Clearly, there are only countably many possible values of $h$ and $r$.
Thus $\mu$-almost all numbers $x \in [0,1]$ have the property
that~\eqref{weyl_c} is true for all positive integers $h$ and for all
integers $r \geq 2$ which are multiplicatively independent of~$b$.  This
proves the theorem.
\end{proof}

\section{Proof of Theorem~\ref{thm:theorem_D}} \label{sec:theorem_D}

We start with the first part of the theorem. It turns out that it is sufficient to relax the 
conclusion of the theorem to simple normality.

\begin{lemma} \label{lemma_D} Assume that for every $n \geq 1$ the random
  variable $X_n$ is uniformly distributed on $\{0, \dots, b-1\}$.  Assume
  furthermore that there exists a function
  $g:~\mathbb{N} \mapsto \mathbb{R}$ which is monotonically increasing to
  $\infty$ such that for all sufficiently large $n$ the random variables
\begin{equation}
\label{lemma-indep}
X_n, ~X_{n+1}, ~\dots, ~X_{n+ \lceil g(n) \log \log n \rceil}
\end{equation}
are mutually independent. Let $x$ be the real number whose  expansion in base $b$ is given 
by $x=0.X_1 X_2 X_3 \dots$. Then $\mathbb{P}$-almost surely the number $x$ is simply 
normal to base $b$.
\end{lemma}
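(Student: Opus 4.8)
The plan is to prove simple normality one digit at a time. Fix a digit $d\in\{0,\dots,b-1\}$ and set $C_N=\#\{1\le n\le N:X_n=d\}$; the goal is $C_N/N\to 1/b$ almost surely. Since $C_N$ is nondecreasing in $N$, it suffices to obtain this along a geometric subsequence: for each fixed $\delta>0$, once $C_{N_k}/N_k\to 1/b$ a.s.\ along $N_k=\lfloor(1+\delta)^k\rfloor$, the squeeze $C_{N_k}\le C_N\le C_{N_{k+1}}$ for $N_k\le N\le N_{k+1}$ gives $\tfrac1{b(1+\delta)}\le\liminf_N C_N/N\le\limsup_N C_N/N\le\tfrac{1+\delta}{b}$ a.s.; intersecting over the countable family $\delta=1/m$ and then taking the union over the $b$ digits yields the lemma. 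By the first Borel--Cantelli lemma it is thus enough to prove $\sum_k\mathbb P\big(|C_{N_k}-N_k/b|>\varepsilon N_k\big)<\infty$ for all $\varepsilon,\delta>0$.

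For the probability estimate I fix $N$ and put $L_N=\lceil g(\lceil\sqrt N\rceil)\log\log\lceil\sqrt N\rceil\rceil$, so $L_N\asymp g(\sqrt N)\log\log N\to\infty$. Partition the positions into \emph{classes} according to which digits are forced to be equal, and write $C_N=\sum_u c_u(N)\,B_u$, where $B_u$ is the Bernoulli$(1/b)$ indicator ``class $u$ has value $d$'' and $c_u(N)$ counts the positions of class $u$ inside $\{1,\dots,N\}$; thus $\sum_u c_u(N)=N$. The independence hypothesis makes every class \emph{locally thin}: for large $n$ the variables $X_n,\dots,X_{n+\lceil g(n)\log\log n\rceil}$ are in particular pairwise distinct, so any class meets such a window at most once; hence consecutive class-elements exceeding $\sqrt N$ are more than $L_N$ apart, and therefore $\max_u c_u(N)\le\sqrt N+N/L_N+1=:C_N^{\ast}=o(N)$.

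I would then exploit that $C_N-N/b=\sum_u c_u(N)(B_u-1/b)$ is a sum of independent random variables whose $u$-th summand lies in $[-c_u(N),c_u(N)]$ and with $\sum_u c_u(N)^2\le C_N^{\ast}N$. Hoeffding's inequality gives $\mathbb P(|C_N-N/b|>\varepsilon N)\le 2\exp(-c\varepsilon^2 N/C_N^{\ast})$, and $N/C_N^{\ast}\gtrsim\min(L_N,\sqrt N)\to\infty$. In the decisive range $L_N\le\sqrt N$ this bound reads $(\log N)^{-c'\varepsilon^2 g(\sqrt N)}$, which along $N_k=\lfloor(1+\delta)^k\rfloor$ is summable precisely because $g(\sqrt{N_k})\to\infty$ eventually forces the exponent past $2$. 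This is exactly where the threshold $\log\log n$ enters: with a window of only $K\log\log n$ the exponent would be the constant $c'K\varepsilon^2$, which is $\le 1$ once $\varepsilon$ is small, so Borel--Cantelli would fail — matching the second half of Theorem~\ref{thm:theorem_D}.

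The main obstacle is the claim that $C_N$ is a sum of independent bounded terms. For general $X_n$ a digit may be a nontrivial function of several independent ``free'' digits rather than a copy of one, so the $B_u$ need not be independent, and a crude union bound over the $\asymp N/L_N$ independence-windows loses a fatal factor $N$. One gets around this by observing that, although distinct classes may be dependent, classes possessing representatives in a common window of length $\asymp g(n)\log\log n$ are mutually independent: the centred class-indicators inherit the same ``locally independent, equal-variance'' structure, and the interaction terms are controlled through this orthogonality — I expect most of the work to be here. (In the setting that feeds Theorem~\ref{thm:normal}, where $t_n=a_{\delta(n)}$ with the $a_k$ genuinely i.i.d., the $B_u$ \emph{are} independent and the previous paragraph applies verbatim, so this obstacle does not arise there.)
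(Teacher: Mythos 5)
There is a genuine gap, and it sits exactly where you flag it yourself. The hypotheses of Lemma~\ref{lemma_D} give only two things: each $X_n$ is uniform on $\{0,\dots,b-1\}$, and the variables inside each window $X_n,\dots,X_{n+\lceil g(n)\log\log n\rceil}$ are mutually independent. They say nothing about digits being \emph{copies} of earlier free digits, so the ``classes'' underlying your decomposition $C_N=\sum_u c_u(N)B_u$ simply do not exist in this generality: outside the windows the joint law can be arbitrary (a digit can be any measurable function of far-away digits, or the dependence need not be functional at all), so $C_N$ need not be a sum of independent, or even pairwise uncorrelated, bounded terms, and the Hoeffding step with variance proxy $\sum_u c_u(N)^2$ has nothing to stand on. Your proposed repair --- that class indicators with representatives in a common window are independent and that ``interaction terms are controlled through this orthogonality'' --- is not an argument: local independence of the $X_n$'s gives no control whatsoever on correlations between counts coming from different windows, which is precisely the fatal factor you identify in the union bound. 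The outer reduction (geometric subsequence, countably many $\delta$ and digits) and the arithmetic showing why $g\to\infty$ beats the threshold are fine, but the central concentration inequality for $C_N$ is unproved, and it is the whole content of the lemma.

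The paper's proof shows how to avoid ever needing cross-window independence. It splits $N_j=\{n:\theta^{j-1}\le n<\theta^j\}$ (with $\theta=1+\varepsilon$) into consecutive blocks of length $m_j=\lceil\varepsilon^{-2}\log j\rceil$; since $g\to\infty$, for large $j$ each block lies inside one independence window, so Hoeffding applies \emph{within} each block and gives $\mathbb{P}(|Z_{i,j}|>\varepsilon)\le 2j^{-2}$ for the block-averaged deviation $Z_{i,j}$. The different blocks are then combined not by independence but by a first-moment argument: $|Z_{i,j}|\le\varepsilon+|Z_{i,j}|\mathbf{1}(|Z_{i,j}|>\varepsilon)$, linearity of expectation over the $r(j)$ blocks, and Markov's inequality yield $\mathbb{P}\bigl(\sum_i|Z_{i,j}|>2\varepsilon r(j)\bigr)\le 2/(\varepsilon j^2)$, which is summable in $j$, so Borel--Cantelli finishes within each $N_j$ and the geometric growth $\#N_{j+1}\approx(1+\varepsilon)\#N_j$ transfers this to all $N$. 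If you want to keep your global-deviation strategy you must either add structural hypotheses (as in the Toeplitz application, where your paragraph does apply) or find a substitute for the missing independence; otherwise the expectation-plus-Markov route is the way to close the gap.
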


We take Lemma~\ref{lemma_D} for granted, and show that it implies 
the first part of Theorem~\ref{thm:theorem_D}. 
%It is well-known that 
Normality of a number to base~$b$ is equivalent to simple normality to all
the bases $b,b^2,b^3,\dots$. Two consecutive random variables such as
$X_1,X_2$ with values in $\{0, \dots, b-1\}$ define one random variable
$Y_1$ with value in $\{0, \dots, b^2-1\}$ by setting $Y_1 = b X_1 + X_2$.
Similarly $X_3$ and $X_4$ define $Y_2$, etc. Then the number
$(0.X_1 X_2 \dots)_b$ has base-$b^2$ expansion $(0.Y_1 Y_2 \dots)_{b^2}$.
Furthermore, since by the assumption of Theorem~\ref{thm:theorem_D} the
random variables
$$
X_{2n-1}, \dots, X_{2n-1+ \lceil g(2n-1) \log \log (2n-1) \rceil}
$$ 
are independent for sufficiently large $n$, this implies that there is a
function $\hat{g}$ which is monotonically increasing to $\infty$ such that
the random variables
$$
Y_n, \dots, Y_{n+ \lceil \hat{g}(n) \log \log n \rceil}
$$ 
are independent as well for all sufficiently large $n$. So the sequence
$Y_1, Y_2, \dots$ also satisfies the assumptions of Lemma~\ref{lemma_D}.
Accordingly, if Lemma~\ref{lemma_D} is true, then almost surely the number
$(0.Y_1 Y_2 \dots)_{b^2} = (0.X_1 X_2 \dots)_b$ is simply normal to base
$b^2$.  In the same way we can show that Lemma~\ref{lemma_D} implies that
almost surely $(0.X_1 X_2 \dots)_b$ is simply normal in bases $b^3$, that
almost surely it is simply normal to base $b^4$, and so on.  Since there
are only countably many bases $b,b^2,b^3,b^4,\dots$, it means that almost
surely $(0.X_1 X_2 \dots)_b$ is simply normal to all bases
$b,b^2,b^3,\dots$, which is equivalent to saying that almost surely
$(0.X_1 X_2 \dots)_b$ is normal to base $b$. Thus, to establish the first
part of Theorem~\ref{thm:theorem_D} it is sufficient to prove
Lemma~\ref{lemma_D}.

\begin{proof}[Proof of Lemma~\ref{lemma_D}]
Let $u \in \{0, \dots, b-1\}$ be a digit. Assume that $\varepsilon>0$ is fixed. Let $\theta = 1+\varepsilon$, and for $j \geq 1$ define
$$
N_j = \{n \geq 1:~\theta^{j-1} \leq n < \theta^j \}.
$$
We partition $N_j$ into disjoint sets of
$m_j = \lceil \varepsilon^{-2} \log j \rceil$ consecutive integers, where
for simplicity of writing we assume that $m_j$ divides $\# N_j$ (so that we
do not need to use one set of smaller cardinality at the end). We denote
these sets by $M_1^{(j)}, \dots, M_{r(j)}^{(j)}$, where
$r(j)= \#N_j / m_j$. It is easily verified that 
%assumption \eqref{indep} of the theorem
condition \eqref{lemma-indep}
 implies that all the random variables
$\{X_n:~n \in M_i^{(j)} \}$ are mutually independent, for all
$i \in \{1, \dots, r(j)\}$, provided that $j$ is sufficiently large; this
follows from the fact that the block length $m_j$ is of order roughly
$\varepsilon^{-2} \log \log n$ for $n \in N_j$, while by assumption
independence holds for random variables whose indices are within distance
$g(n) \log\log n$ of each other, where $g(n) \to \infty$.

By Hoeffding's inequality (see for example~\cite[Theorem 2.16]{bercu}) we have
\begin{eqnarray}
\mathbb{P} \left(\left| \frac{1}{m_j} \sum_{n \in M_i^{(j)}} \mathbf{1}(X_n = u)  - \frac{1}{b} \right| > \varepsilon \right) & \leq & 2 e^{-2 \varepsilon^2 m_j} \nonumber\\
& \leq & \frac{2}{j^{2}}, \label{calc_in}
\end{eqnarray}
for all $i \in \{1, \dots, r(j)\}$ and all sufficiently large $j$, where we
used $m_j \geq \varepsilon^{-2} \log j$. Here, and in the sequel, we write
$\mathbf{1}(E)$ for the indicator function of an event $E$. Let $Z_{i,j}$
denote the random variable
$\frac{1}{m_j} \sum_{n \in M_i^{(j)}} \mathbf{1}(X_n = u)  -
\frac{1}{b}$, for $i \in \{1, \dots, r(j)\}$. Then trivially
$|Z_{i,j}| \leq 1$. Thus we have
\begin{align*}
\mathbb{E} \big( |Z_{i,j}| \cdot \mathbf{1} \left(|Z_{i,j}| > \varepsilon\right) \big) & \leq\  \mathbb{E} \left(\mathbf{1} \left(|Z_{i,j}| > \varepsilon\right) \right) 
\\& =\  \mathbb{P} (|Z_{i,j}| > \varepsilon) 
\\&\leq\ \frac{2}{j^{2}}
\end{align*}
for sufficiently large $j$, as calculated in \eqref{calc_in}. By linearity of the expectation, this implies
$$
\mathbb{E} \left( \sum_{i=1}^{r(j)} \Big( |Z_{i,j}| \cdot \mathbf{1} \left(|Z_{i,j}| > \varepsilon\right)\Big) \right) \leq \frac{2 r(j)}{j^{2}},
$$
and thus by Markov's inequality
\begin{equation} \label{mark}
\mathbb{P} \left(\sum_{i=1}^{r(j)} |Z_{i,j}| \cdot \mathbf{1} \left(|Z_{i,j}| > \varepsilon \right) > \varepsilon r(j) \right) \leq \frac{2}{\varepsilon j^2}. 
\end{equation}
Note that
$$
|Z_{i,j}| \leq \varepsilon + |Z_{i,j}| \cdot \mathbf{1} \left(|Z_{i,j}| > \varepsilon\right).
$$
Thus
$$
\sum_{i=1}^{r(j)} |Z_{i,j}| \leq \varepsilon r(j) + \sum_{i=1}^{r(j)} |Z_{i,j}| \cdot \mathbf{1} \left(|Z_{i,j}| > \varepsilon\right),
$$
and \eqref{mark} implies that
\begin{equation} \label{prob}
\mathbb{P} \left( \sum_{i=1}^{r(j)} |Z_{i,j}|  >  2 \varepsilon r(j) \right) \leq \frac{2}{\varepsilon j^2}.
\end{equation}
Note that if $\sum_{i=1}^{r(j)} |Z_{i,j}|  \leq  2 \varepsilon r(j)$, then 

% \begin{equation} \label{conv_nj}
%  \left|\frac{1}{N_j} \# \{n \in N_j:~X_n = u \} - \frac{1}{b} \right| \leq 2 \varepsilon.
% \end{equation}
\begin{equation} \label{conv_nj}
 \left|\frac{1}{\#N_j} \# \{n \in N_j:~X_n = u \} - \frac{1}{b} \right| \leq 2 \varepsilon.
\end{equation}
The exceptional probabilities in \eqref{prob} form a convergent series when
summing over~$j$, so by the first Borel-Cantelli lemma with probability one
only finitely many of the corresponding events occur. Accordingly,
$\mathbb{P}$-almost surely we have \eqref{conv_nj} for all sufficiently
large $j$. From this it is easy to see that
% \begin{equation*} 
%  \left| \frac{1}{N_j}\# \{n \in N:~X_n = u \} - \frac{1}{b} \right| \leq 4 \varepsilon
% \end{equation*}
\begin{equation*} 
 \left| \frac{1}{N}\#\{n \leq N:~X_n = u \} - \frac{1}{b} \right| \leq 4 \varepsilon
\end{equation*}
almost surely, for all sufficiently large $N$, where it is important that $N_{j+1} \approx (1+\varepsilon) N_j$. 
Since we can choose $\varepsilon$ arbitrarily close to zero, this proves Lemma~\ref{lemma_D}.
\end{proof}

\begin{proof}[Proof of the second part of Theorem~\ref{thm:theorem_D}]
  For the second part of the theorem, let $(Z_{j,m})_{j \geq 1,m \geq 0}$
  be an array of independent, identically distributed (i.i.d.) random
  variables having uniform distribution on $A=\{0, \ldots b-1\}$. Clearly
  it is sufficient to prove the second part of Theorem~\ref{thm:theorem_D}
  for those values of $K$ which are a positive (integral) powers of $2$, so
  we will assume that $K$ is of this form. 

  We define the digits $X_n$ of a
  number $x = (0.X_{1} X_{2} X_{3} \dots)_b$ by setting $X_n = Z_{j,m}$,
  where $j=\lfloor \log_2 n \rfloor$, and where $m$ is the unique integer
  for which $n \equiv m \mod 2Kr$ and $r$ is defined as the largest
  positive integer which is a power of 2 and for which $2^{(2^{r})} \leq n$
  (this definition only works for $n \geq 4$, so we may set
  $X_1 X_2 X_3 = 000$). So, for example when $K=1$, then for
  $n \in \{16,\dots,31\}$ we have $j=4$ and $r=2$, and thus
  $X_{16}, X_{17}, \dots, X_{31}$ is the pattern
  $Z_{4,0} Z_{4,1} Z_{4,2} Z_{4,3}$, being repeated four times. Or when
  $n \in \{2^{16},\dots, 2^{17} -1\}$, then $j=16$ and $r=4$, and so
  $X_{2^{16}}, \dots, X_{2^{17}-1}$ is the pattern
  $Z_{16,0}, Z_{16,1}, \dots, Z_{16,8}$, being repeated
  $2^j / (2rK) = 2^{16}/8$ times. This example is constructed in such a way
  that the digits $X_n, \dots, X_{n+2Kr-1}$ are mutually independent, where
  by definition $2^{(2^r)} \geq n$ and thus
  $$
  r \geq (\log_2 \log_2 n)/2 \geq 2(\log \log n)/3,
  $$
  which shows 
  that~\eqref{disjoint} indeed holds for all sufficiently large $n$.

  We will now show that the random number $x$ almost surely is not simply
  normal, which will follow from the fact that there is a digit $u \in A$
  for which the ratio
$$
\frac{1}{N} \Big|\{n \leq N:~X_n  = u\} \Big|
$$ 
does not converge to $1/b$. For simplicity, assume that $u=0$. For
$j \geq 1$, set $N_j = \{2^{j}, \dots, 2^{j+1}-1\}$. From the construction
of our sequence $(X_n)_{n \geq 1}$ it is easily seen that (for sufficiently
large $j$) the block of digits $X_{2^j}, \dots, X_{2^{j+1}-1}$ consists of
the block $Z_{j,0} \dots Z_{j,2Kr-1}$ for some appropriate value of
$r=r(j)$, which is repeated $2^j/(2Kr)$ times. Thus we have
\begin{equation} \label{zmi}
\# \Big\{n \in N_j:~X_n  = 0 \Big\} = \frac{2^j}{2Kr} ~\#  \Big\{0 \leq m < 2Kr:~Z_{j,m} = 0 \Big\}.
\end{equation}
We choose a ``small'' fixed value of $\varepsilon > 0$, and define events 
$$
E_j = \left\{ \left|\frac{1}{2^j} \cdot \# \left\{n \in N_j:~X_n  = 0 \right\} - \frac{1}{b} \right| \geq \varepsilon \right\}.
$$
By \eqref{zmi} we have
$$
\mathbb{P} (E_j) = \mathbb{P} \left\{ \left| \frac{1}{2Kr} \cdot \# \{0 \leq m < 2Kr:~Z_{j,m} = 0\} - \frac{1}{b} \right| \geq \varepsilon \right\}. 
$$
To estimate $\mathbb{P}(E_j)$, note that we have
\begin{equation} \label{distr} \nonumber
\# \Big\{0 \leq  m < 2Kr:~Z_{j,m} = 0 \Big\} = \sum_{m=0}^{2Kr-1} \mathbf{1}(Z_{j,m}=0),
\end{equation}
where $\mathbf{1}(Z_{j,m}=0)$ is the indicator function of the event
$Z_{j,m}=0$, and where accordingly
$(\mathbf{1}(Z_{j,m}=0))_{0 \leq m \leq r(j)-1}$ is a sequence of i.i.d.\
random variables with mean $1/b$ and variance
$\sigma_b^2 = \frac{1}{b}\left(1-\frac{1}{b}\right)$ (where we use the fact
that the independence property of the indicators $\mathbf{1}(Z_{j,m}=0)$ is
inherited from the independence assumption on the
$Z_{j,m}$'s). Accordingly, $\sum_{m=0}^{2Kr-1} \mathbf{1}(Z_{j,m}=0)$ has
binomial distribution $B(r,1/b)$. Using standard estimates for the tail
probabilities of the binomial distribution we can show that
$\mathbb{P}(E_j)$ is of order roughly $e^{-c(\varepsilon) 2Kr}$ for large $r$, where
$c(\varepsilon) = (1+o(1)) \varepsilon^2 \sigma_b^2/2$ as $\varepsilon \to 0$; this can be deduced either from lower bounds for the
tail of the binomial distribution, together with a linearization of the
Kullback-Leibler distance (see~\cite[Lemma 4.7.2]{ash} or~\cite[Theorem
11.1.3]{cover}), or from a comparison of the tail of a binomial
distribution with the tail of the normal distribution
(see~\cite{slud}). Note in particular that $c(\varepsilon)$ goes to zero as
a function of $\varepsilon$. Thus, if $\varepsilon$ was chosen so small
that $2K c(\varepsilon) < 1/8$, then we certainly have
$$
\mathbb{P} (E_j) \geq e^{-r/4}
$$
for all sufficiently large $j$. By the definition of $r$ we have 
$r=r(j) \geq \frac{\log_2 (j+1)}{2} \geq \frac{\log j}{2}$ for all sufficiently large $j$. 
Thus,
$$
\mathbb{P} (E_j) \geq e^{-r/4} \geq e^{-(\log j)/2} = \frac{1}{\sqrt{j}}
$$
for all sufficiently large $j$. This allows us to deduce that 
$$
\sum_{j=1}^\infty \mathbb{P}(E_j) = + \infty.
$$
Note that the events $(E_j)_{j \geq 1}$ are mutually independent, since $E_j$ only 
depends on random variables $Z_{j,m}$ whose first index is $j$. 
Thus by the second Borel--Cantelli lemma with probability one infinitely many events $E_j$ occur. 
However, this means that with probability one the digit $0$ does not have the correct asymptotic 
frequency within the blocks of digits with indices in $N_j$. Note that the length of the blocks $N_j$ grows so quickly that
% $$
% \lim_{j \to \infty} \frac{|N_j|}{|N_1| + \dots + |N_{j-1}|} = 1 > 0,
% $$
$$
\lim_{j \to \infty} \frac{\#N_j}{\#N_1 + \cdots + \#N_{j-1}} = 1 > 0,
$$
so that the contribution of digits contained in $N_j$ is not (asymptotically) negligible in comparison 
with the contribution of the digits from all the previous blocks. From this it is easy to deduce that
$$
\frac{1}{N} \cdot \# \Big\{n \leq N:~X_n  = 0 \Big\} \not\to \frac{1}{b}
$$
for $\mathbb{P}$-almost all $x$, which proves the second part of the theorem.
\end{proof}

\section{Proof of Theorem~\ref{thm:toeplitz}} \label{sec:toeplitz}

In this section, finite sequences of digits are called \emph{words}.
  If $w = a_1\cdots a_n$ is a word of length~$n$ and $i,j$ are two integers
  such that $1 \le i \le j \le n$, the word $a_i \cdots a_j$ is called
  either the \emph{block} of~$w$ from position~$i$ to position~$j$ or the
  \emph{block} of length $j-i+1$ at position~$i$ in~$w$.
Let's fix an integer~$k$ and consider the finite set $I$ of integers
$I = \{i: 1\leq i\leq (p_1p_2)^{k+1}\}$ and its subset
$J = \{ j \in I : p_1^{k+1} | j \text{ or } p_2^{k+1} | j \}$.  The set~$J$
can be decomposed as $J = J_1 \cup J_2$ where
$J_1 = \{ p_1^{k+1}m : 1 \le m \le p_2^{k+1}\}$,
$J_2 = \{ p_2^{k+1}m : 1 \le m \le p_1^{k+1}\}$ and
$J_1 \cap J_2 = \{ (p_1p_2)^{k+1}\}$.  It follows that $J$ has cardinality
$p_1^{k+1} + p_2^{k+1} - 1$.  Let $\rho_J$ be the function which maps each
word~$w$ of length~$(p_1p_2)^{k+1}$ to the word of length
$(p_1p_2)^{k+1} -p_1^{k+1} - p_2^{k+1} + 1$ obtained by removing from~$w$
symbols at positions in~$J$. Formally, if the word~$w$ is equal to
$a_1\cdots a_{(p_1p_2)^{k+1}}$ which is written $\prod_{i \in I}{a_i}$, the
word~$\rho_J(w)$ is equal to $\prod_{i \in I \setminus J}{a_i}$. Note
 that
\begin{displaymath}
  (p_1p_2)^{k+1} -p_1^{k+1} - p_2^{k+1} + 1 = (p_1^{k+1}-1)(p_2^{k+1}-1)
\end{displaymath}

We introduce a last notation.  Let $\sigma$ be a permutation of
$\{1,\ldots,n\}$.  It induces a permutation, also denoted by~$\sigma$,
of~$A^n$ defined by
$\sigma(a_1\cdots a_n) = a_{\sigma(1)} \cdots a_{\sigma(n)}$ for each word
$a_1\cdots a_n$ of length~$n$.

\begin{lemma} \label{lem:toeplitz-key}
  Suppose that $\tau_P(x)$ is decomposed $\tau_P(x) = w_1w_2w_3\cdots$
  where each word~$w_i$ has length~$(p_1p_2)^{k+1}$.  There is a
  permutation~$\sigma$ of $\{1,\ldots,(p_1^{k+1}-1)(p_2^{k+1}-1)\}$ such
  that for each integer~$i$, the word~$\sigma(\rho_J(w_i))$ is equal to the
  concatenation 
%$\prod_{0 \le i_1,i_2 \le k}u_{i,i_1,i_2}$
$\prod_{ i_1=0}^{ k} \prod_{ i_2=0} ^{ k}u_{i,i_1,i_2}$
 of the
    $(k+1)^2$ blocks of~$x$ where each block $u_{i,i_1,i_2}$ starts at
    position $(p_1-1)(p_2-1)p_1^{i_1}p_2^{i_2}(i-1)+1$ and has length
    $(p_1-1)(p_2-1)p_1^{i_1}p_2^{i_2}$.
\end{lemma}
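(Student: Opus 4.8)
The plan is to identify, for every block $w_i$ and every position surviving $\rho_J$, exactly which symbol $a_m$ of $x$ it carries, and then to observe that this bookkeeping does not depend on $i$. Write $Q=(p_1p_2)^{k+1}$, so that $w_i$ is the word $t_{(i-1)Q+1}t_{(i-1)Q+2}\cdots t_{iQ}$ of $\tau_P(x)$, and $\rho_J(w_i)$ retains the symbols $t_n$ whose offset $\ell:=n-(i-1)Q$ satisfies $p_1^{k+1}\nmid\ell$ and $p_2^{k+1}\nmid\ell$, i.e.\ $v_{p_1}(\ell)\le k$ and $v_{p_2}(\ell)\le k$. The key observation is that $(i-1)Q$ is divisible by both $p_1^{k+1}$ and $p_2^{k+1}$ while $\ell$ is not, hence $v_{p_j}(n)=v_{p_j}(\ell)$ for $j=1,2$. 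Writing $\ell=p_1^{e_1}p_2^{e_2}s$ with $0\le e_1,e_2\le k$ and $\gcd(s,p_1p_2)=1$, one therefore gets $n=p_1^{e_1}p_2^{e_2}m$, where
\[
  m:=(i-1)\,p_1^{k+1-e_1}p_2^{k+1-e_2}+s .
\]
Here $m$ is coprime to $p_1p_2$, because the first summand is a multiple of $p_1p_2$ (as $e_j\le k<k+1$) and $s$ is coprime to $p_1p_2$. Consequently, by the definition of the Toeplitz transform, $t_n=a_{\delta(n)}$ with $\delta(n)$ equal to the index of $m$ in the increasing enumeration $j_1<j_2<\cdots$ of the integers coprime to $p_1p_2$.

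Next I would group the surviving offsets according to their \emph{type} $(e_1,e_2)$. For a fixed type, the surviving offsets are exactly the integers $p_1^{e_1}p_2^{e_2}s$ with $1\le s\le p_1^{k+1-e_1}p_2^{k+1-e_2}$ and $\gcd(s,p_1p_2)=1$, a set that depends only on $k,p_1,p_2$. As $s$ runs through these values in increasing order the offset increases, so the corresponding symbols appear in $\rho_J(w_i)$ in that order; and $m$ runs, also increasingly, through all integers coprime to $p_1p_2$ in the interval $((i-1)R,iR]$, where $R:=p_1^{k+1-e_1}p_2^{k+1-e_2}$. Since $p_1p_2\mid R$, every interval of length $R$ contains precisely $R(1-1/p_1)(1-1/p_2)=(p_1-1)(p_2-1)p_1^{k-e_1}p_2^{k-e_2}$ such integers; setting $i_1:=k-e_1$ and $i_2:=k-e_2$, this number equals $c:=(p_1-1)(p_2-1)p_1^{i_1}p_2^{i_2}$. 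Hence the integers coprime to $p_1p_2$ in $((i-1)R,iR]$ are exactly those with indices $(i-1)c+1,\dots,ic$, listed in increasing order, so reading off the type-$(e_1,e_2)$ symbols of $\rho_J(w_i)$ in the order in which they occur produces $a_{(i-1)c+1}a_{(i-1)c+2}\cdots a_{ic}$ — precisely the block $u_{i,i_1,i_2}$ of $x$, which starts at position $(p_1-1)(p_2-1)p_1^{i_1}p_2^{i_2}(i-1)+1$ and has length $(p_1-1)(p_2-1)p_1^{i_1}p_2^{i_2}$.

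Finally I would define $\sigma$ as the permutation rearranging a word of length $(p_1^{k+1}-1)(p_2^{k+1}-1)$ so that, reading the result from left to right, one meets first — for $i_1=0,\dots,k$ and, within each $i_1$, for $i_2=0,\dots,k$ — the symbols of $\rho_J(w_i)$ of type $(k-i_1,k-i_2)$, taken in increasing order of their offset. Since $I\setminus J$, the type map $\ell\mapsto(v_{p_1}(\ell),v_{p_2}(\ell))$, and the ordering inside each type class are all independent of $i$, this is a single permutation, and it is a permutation of $\{1,\dots,(p_1^{k+1}-1)(p_2^{k+1}-1)\}$ because, by the identity noted just before the lemma, $(p_1^{k+1}-1)(p_2^{k+1}-1)=\sum_{0\le i_1,i_2\le k}(p_1-1)(p_2-1)p_1^{i_1}p_2^{i_2}$. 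By the preceding paragraph, $\sigma(\rho_J(w_i))=\prod_{i_1=0}^{k}\prod_{i_2=0}^{k}u_{i,i_1,i_2}$ for every $i$, which is the assertion of the lemma. The only genuinely delicate step is the valuation computation in the first paragraph: it is exactly because the block boundary $(i-1)(p_1p_2)^{k+1}$ is divisible by $p_1^{k+1}$ and $p_2^{k+1}$ that adding it does not change the $p_1$- and $p_2$-adic valuations of the offset, which is what forces the $\{p_1,p_2\}$-free part of $n$ into the clean additive shape above; after that the argument is just the elementary fact that coprimality to $p_1p_2$ has period $p_1p_2$.
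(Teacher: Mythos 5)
Your proof is correct and follows essentially the same route as the paper's: classify the surviving positions of $w_i$ by the $p_1$- and $p_2$-adic valuations of their offset, count the positions of each type, and observe that under $\tau_P$ they correspond to the consecutive block of $x$ of length $(p_1-1)(p_2-1)p_1^{i_1}p_2^{i_2}$ starting at position $(p_1-1)(p_2-1)p_1^{i_1}p_2^{i_2}(i-1)+1$. You merely spell out in more detail (via the valuation of $n$ versus the offset and the index of the $\{p_1,p_2\}$-free part $m$) the step the paper states tersely as ``by definition of the Toeplitz transform these symbols are at consecutive positions in $x$.''
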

Note for each $0 \le i_1,i_2 \le k$, the length of $u_{i,i_1,i_2}$ is the
same for all $i \ge 1$ and that
\begin{displaymath}
   x  = u_{1,i_1,i_2}u_{2,i_1,i_2}u_{3,i_1,i_2} \cdots
\end{displaymath}
is the decomposition of~$x$ in blocks of
length~$(p_1-1)(p_2-1)p_1^{i_1}p_2^{i_2}$.  What is important is that both
$\rho_J$ and~$\sigma$ are fixed and do not depend on~$w_i$.  The
permutation~$\sigma$ is not made explicit by the statement of the lemma
because it is not used in the sequel but it can be easily recovered from
the proof of the lemma.
\begin{proof}
  The positions in~$\tau_P(x)$ of the word~$w_i$ are the integers from
  $(p_1p_2)^{k+1}(i-1)+1$ to $(p_1p_2)^{k+1}i$, 
% that is the integers of the
  so they are of the form $(p_1p_2)^{k+1}(i-1)+j$ where $1 \le j \le (p_1p_2)^{k+1}$.  
  By definition, the function~$\rho_J$ removes the symbols at a position
  of the form $(p_1p_2)^{k+1}(i-1)+j$ where $j \in J$.  It follows that
  the word $\rho_J(w_i)$ contains the symbols at a position
  $(p_1p_2)^{k+1}(i-1)+j$ where $j$ can be written
  $j = p_1^{i_1}p_2^{i_2}m$ for $0 \le i_1,i_2 \le k$ and $m$ 
  divisible by neither $p_1$ nor~$p_2$.

  Fix $i_1$ and~$i_2$ such that $0 \le i_1,i_2 \le k$ and consider
  all symbols of~$w_i$ at positions of the form $(p_1p_2)^{k+1}(i-1)+j$
  where $j = p_1^{i_1}p_2^{i_2}m$ for $0 \le i_1,i_2 \le k$ and $m$ 
  divisible by neither $p_1$ or~$p_2$.  Note that in~$p_1p_2$ consecutive
  integers, exactly $p_1+p_2-1$ of them are divisible by either $p_1$
  or~$p_2$.  Since $1 \le m \le p_1^{k+1-i_1}p_2^{k+1-i_2}$, there are
  exactly $(p_1-1)(p_2-1)p_1^{k-i_1}p_2^{k-i_2}$ possible values for~$m$.
  By definition of the Toeplitz transform~$\tau_P$, these symbols
  are at consecutive positions in~$x$.   More precisely, they are the
  symbols from position $(p_1-1)(p_2-1)p_1^{k-i_1}p_2^{k-i_2}(i-1)+1$ 
  to position $(p_1-1)(p_2-1)p_1^{k-i_1}p_2^{k-i_2}i$ in~$x$.  Calling this
  word $u_{i,k-i_1,k-i_2}$ for each $0 \le i_1,i_2 \le k$ provides the 
  decomposition of~$\rho_J(w_i)$.
\end{proof}

We continue with two lemmas that show  that Condition~(\ref{toeplitz-ii}) is
quite robust.  First we prove that if Condition~(\ref{toeplitz-ii}) of
Theorem~\ref{thm:toeplitz} holds for  words of some given lengths then it also
holds for all shorter words.

\begin{lemma} \label{lem:shorter}

  Fix an integer $k \geq 0$ and let
  $(\ell_{i_1,i_2})_{0 \le i_1,i_2 \le k}$ be a family of non-negative
  integers.  If
  \begin{displaymath}
    \lim_{N\to\infty} \frac{1}{N}\#\left\{
      n :
      \begin{array}{l}
        1\leq n \le N, \text{ and for all } 0 \le i_1,i_2 \le k,\\
        u_{i_1,i_2} \text{ occurs in $x$ at position
                       $(p_1-1)(p_2-1)p_1^{i_1}p_2^{i_2}n$}
      \end{array} 
      \right\}
    \end{displaymath}
    is equal to $b^{ -\sum_{0 \le i_1, i_2 \le k} |u_{i_1,i_2 }|}$ for all
    families $(u_{i_1,i_2})_{0 \le i_1,i_2 \le k}$ of finite words such
    that $|u_{i_1,i_2}| = \ell_{i_1, i_2}$, then it also holds for all
    families $(v_{i_1, i_2})_{0 \le i_1, i_2 \le k}$ of finite words such
    that $|v_{i_1,i_2}| \leq \ell_{i_1,i_2}$.
\end{lemma}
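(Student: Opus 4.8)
The plan is to deduce the statement for a family of short words from the hypothesis, which concerns families of words of the exact prescribed lengths $\ell_{i_1,i_2}$, by summing over all ways of padding each short word on the right up to the required length.

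Concretely, I would fix a family $(v_{i_1,i_2})_{0\le i_1,i_2\le k}$ with $|v_{i_1,i_2}|\le\ell_{i_1,i_2}$, put $d_{i_1,i_2}=\ell_{i_1,i_2}-|v_{i_1,i_2}|\ge 0$, and write $q_{i_1,i_2}(n)=(p_1-1)(p_2-1)p_1^{i_1}p_2^{i_2}n$ for brevity. For any word $u$, any position $p$ and any $d\ge 0$, the event ``$u$ occurs in $x$ at position $p$'' is the disjoint union, over the $b^{d}$ words $w$ of length $d$, of the events ``$uw$ occurs in $x$ at position $p$'', simply because such an event is determined by the block of $x$ of length $|u|+d$ starting at $p$ and we are enumerating its last $d$ symbols. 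Applying this identity for each pair $(i_1,i_2)$ with $u=v_{i_1,i_2}$, $d=d_{i_1,i_2}$, $p=q_{i_1,i_2}(n)$, and then using the set-theoretic distributive law to pull the finitely many unions out of the intersection over $(i_1,i_2)$, one gets that for every $n$ the event
\[
\{\, v_{i_1,i_2}\text{ occurs in }x\text{ at position }q_{i_1,i_2}(n)\text{ for all }0\le i_1,i_2\le k \,\}
\]
is the disjoint union, over all families $(w_{i_1,i_2})_{0\le i_1,i_2\le k}$ with $|w_{i_1,i_2}|=d_{i_1,i_2}$, of the corresponding event for the family $(v_{i_1,i_2}w_{i_1,i_2})_{0\le i_1,i_2\le k}$. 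Disjointness is clear: two distinct extension families differ in some coordinate $(i_1,i_2)$, and there the two occurrence events are incompatible.

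Passing to counting functions, this yields, for every $N$,
\[
\#\{\, n\le N : v_{i_1,i_2}\text{ occurs at }q_{i_1,i_2}(n)\ \forall i_1,i_2 \,\}
= \sum_{(w_{i_1,i_2})}\#\{\, n\le N : v_{i_1,i_2}w_{i_1,i_2}\text{ occurs at }q_{i_1,i_2}(n)\ \forall i_1,i_2 \,\},
\]
where the finite sum ranges over the $\prod_{0\le i_1,i_2\le k}b^{d_{i_1,i_2}}=b^{\sum d_{i_1,i_2}}$ extension families. Each family $(v_{i_1,i_2}w_{i_1,i_2})$ consists of words of lengths exactly $\ell_{i_1,i_2}$, so by hypothesis, after dividing by $N$ and letting $N\to\infty$, each summand tends to $b^{-\sum\ell_{i_1,i_2}}$. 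Interchanging the finite sum with the limit, the density for $(v_{i_1,i_2})$ equals
\[
b^{\sum d_{i_1,i_2}}\cdot b^{-\sum\ell_{i_1,i_2}} = b^{-\sum(\ell_{i_1,i_2}-d_{i_1,i_2})} = b^{-\sum|v_{i_1,i_2}|},
\]
which is exactly the claim.

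I do not expect any genuine obstacle: this is a routine ``sum over extensions'' argument. The only points deserving a little care are getting the disjoint decomposition of the simultaneous-occurrence event right (in particular, checking that the distributive law really produces a disjoint union of events indexed by extension families) and the harmless interchange of the finite sum with the limit in $N$.
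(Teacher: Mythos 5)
Your proposal is correct and is essentially the paper's own proof: the paper establishes exactly the same counting identity, summing over all extension families $(w_{i_1,i_2})$ with $|w_{i_1,i_2}|=\ell_{i_1,i_2}-|v_{i_1,i_2}|$, and then concludes by applying the hypothesis to each summand. Your additional remarks on disjointness and on interchanging the finite sum with the limit are fine and only make explicit what the paper leaves implicit.
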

\begin{proof}
  The result follows from the equality
  \begin{align*}
   & \#\left\{ n :
      \begin{array}{l}
        1\leq n \le N, \text{ and for all } 0 \le i_1,i_2 \le k,\\
        v_{i_1,i_2} \text{ occurs in $x$ at position
                               $(p_1-1)(p_2-1)p_1^{i_1}p_2^{i_2}n$}
      \end{array} 
    \right\} \\
    = \sum_{w_{i_1,i_2}}
   & \#\left\{ n :
      \begin{array}{l}
        1\leq n \le N, \text{ and for all } 0 \le i_1,i_2 \le k,\\
        v_{i_1,i_2}w_{i_1,i_2} \text{ occurs in $x$ at position
                              $(p_1-1)(p_2-1)p_1^{i_1}p_2^{i_2}n$}
      \end{array} 
    \right\} 
  \end{align*}
  where the summation ranges over all families
  $(w_{i_1, i_2})_{0 \le i_1, i_2 \le k}$ of finite words such that
  $|w_{i_1, i_2}| = \ell_{i_1,i_2}-|v_{i_1, i_2}|$ for each
  $0 \le i_1, i_2 \le k$.
\end{proof}

The following lemma shows that offsets can be freely added in
Condition~(\ref{toeplitz-ii}) of Theorem~\ref{thm:toeplitz}.
\begin{lemma} \label{lem:offsets}
  Fix a non-negative integer $k$ and let
  $(\delta_{i_1, i_2})_{0 \le i_1, i_2 \le k}$ be a family of
  integers. Then, for every family
  $(u_{i_1, i_2})_{0 \le i_1, i_2 \le k}$ of finite words
    \begin{displaymath}
      \lim_{N\to\infty} \frac{1}{N}\#\left\{ n :
      \begin{array}{l}
        1\leq n \le N, \text{ and for all } 0 \le i_1, i_2 \le k,\\
        u_{i_1,i_2} \text{ occurs in $x$ at position
                      $(p_1-1)(p_2-1)p_1^{i_1}p_2^{i_2}n$}
      \end{array} 
      \right\}
    \end{displaymath}
    is equal to $b^{ -\sum_{0 \le i_1,i_2 \le k} |u_{i_1, i_2 }|}$ if and
    only if for every family $(v_{i_1, i_2})_{0 \le i_1, i_2 \le k}$ of
    finite words
   \begin{displaymath}
      \lim_{N\to\infty} \frac{1}{N}\#\left\{ n :
      \begin{array}{l}
        1\leq n \le N, \text{ and for all } 0 \le i_1, i_2 \le k,\\
        v_{i_1,i_2} \text{ occurs in $x$ at position
        $(p_1-1)(p_2-1)p_1^{i_1}p_2^{i_2}n + \delta_{i_1,i_2}$}
      \end{array} 
      \right\}
    \end{displaymath}
    is equal to $b^{ -\sum_{0 \le i_1, i_2 \le k} |v_{i_1, i_2 }|}$.
\end{lemma}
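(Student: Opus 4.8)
The plan is to reduce the equivalence to two soft manipulations of the statement: a \emph{shift} that changes the offset family $\delta$ by an arbitrary (integer) multiple of the ``stride'', and a \emph{prepend} step that monotonically increases the offsets, in the spirit of the proof of Lemma~\ref{lem:shorter}. Write $c_{i_1,i_2} = (p_1-1)(p_2-1)p_1^{i_1}p_2^{i_2}$ for the coefficient of $n$ in the positions, and for an integer family $\gamma = (\gamma_{i_1,i_2})_{0\le i_1,i_2\le k}$ let $\mathrm{P}(\gamma)$ denote the assertion that for every family $(v_{i_1,i_2})_{0\le i_1,i_2\le k}$ of finite words the limiting frequency of those $n$ for which $v_{i_1,i_2}$ occurs in $x$ at position $c_{i_1,i_2}n+\gamma_{i_1,i_2}$ for all $i_1,i_2$ equals $b^{-\sum|v_{i_1,i_2}|}$. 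Then $\mathrm{P}(\mathbf 0)$ is exactly Condition~(\ref{toeplitz-ii}) and the right-hand assertion of the lemma is $\mathrm{P}(\delta)$, so I want $\mathrm{P}(\mathbf 0)\Leftrightarrow\mathrm{P}(\delta)$.

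First I would prove shift invariance: for every integer $n_0$ one has $\mathrm{P}(\gamma)\Leftrightarrow\mathrm{P}(\gamma')$, where $\gamma'_{i_1,i_2}=\gamma_{i_1,i_2}+c_{i_1,i_2}n_0$. This is immediate, since $c_{i_1,i_2}(n+n_0)+\gamma_{i_1,i_2}=c_{i_1,i_2}n+\gamma'_{i_1,i_2}$, so for each word family the counting set for $\gamma'$ up to $N$ is the image under $n\mapsto n-n_0$ of the counting set for $\gamma$ in the range $(n_0,N+n_0]$; these counts differ by at most $2|n_0|$ (and by finitely many further $n$ for which some position is non-positive), which is irrelevant after dividing by $N$. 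In particular $n_0$ is allowed to be negative, and both assertions quantify over the same word families.

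Next — and this is where the only genuine combinatorial content lies — I would prove the prepend step: if $\mathrm{P}(\gamma')$ holds and $\gamma_{i_1,i_2}\ge\gamma'_{i_1,i_2}$ for all $i_1,i_2$, then $\mathrm{P}(\gamma)$ holds. Put $e_{i_1,i_2}=\gamma_{i_1,i_2}-\gamma'_{i_1,i_2}\ge0$. For a fixed family $(v_{i_1,i_2})$ and all sufficiently large $n$, the block of $x$ of length $e_{i_1,i_2}+|v_{i_1,i_2}|$ starting at position $c_{i_1,i_2}n+\gamma'_{i_1,i_2}$ equals $w_{i_1,i_2}v_{i_1,i_2}$ for a uniquely determined word $w_{i_1,i_2}$ of length $e_{i_1,i_2}$ precisely when $v_{i_1,i_2}$ occurs at position $c_{i_1,i_2}n+\gamma_{i_1,i_2}$; hence the counting set for $\mathrm{P}(\gamma)$ with words $(v_{i_1,i_2})$ is the \emph{disjoint} union, over all families $(w_{i_1,i_2})$ with $|w_{i_1,i_2}|=e_{i_1,i_2}$, of the counting sets for $\mathrm{P}(\gamma')$ with words $(w_{i_1,i_2}v_{i_1,i_2})$ — exactly the decomposition used in Lemma~\ref{lem:shorter}, except that here one prepends rather than appends. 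There are only finitely many such families $(w_{i_1,i_2})$, so the limit of the sum is the sum of the limits, and by $\mathrm{P}(\gamma')$ the frequency for $\mathrm{P}(\gamma)$ equals $\sum_{(w)}b^{-\sum(e_{i_1,i_2}+|v_{i_1,i_2}|)}=b^{\sum e_{i_1,i_2}}\,b^{-\sum e_{i_1,i_2}}\,b^{-\sum|v_{i_1,i_2}|}=b^{-\sum|v_{i_1,i_2}|}$, as needed.

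Finally I would combine the two. For $\mathrm{P}(\mathbf 0)\Rightarrow\mathrm{P}(\delta)$, choose an integer $n_0$ so negative that $c_{i_1,i_2}n_0\le\delta_{i_1,i_2}$ for all $i_1,i_2$: then $\mathrm{P}(\mathbf 0)$ gives $\mathrm{P}((c_{i_1,i_2}n_0)_{i_1,i_2})$ by shift invariance, which gives $\mathrm{P}(\delta)$ by the prepend step. For $\mathrm{P}(\delta)\Rightarrow\mathrm{P}(\mathbf 0)$, choose $n_0$ so negative that $\delta_{i_1,i_2}+c_{i_1,i_2}n_0\le0$ for all $i_1,i_2$: then $\mathrm{P}(\delta)$ gives $\mathrm{P}((\delta_{i_1,i_2}+c_{i_1,i_2}n_0)_{i_1,i_2})$ by shift invariance, which gives $\mathrm{P}(\mathbf 0)$ by the prepend step. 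The only point requiring care is the verification of the disjoint decomposition in the prepend step (together with the legitimacy of exchanging the finite sum with the limit); the shift invariance and the final combination are routine bookkeeping.
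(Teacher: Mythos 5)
Your proposal is correct and is essentially the paper's own argument: the paper's two moves (summing over a prepended symbol to raise a single offset by $1$, and substituting $n\mapsto n-1$ to lower all offsets by $(p_1-1)(p_2-1)p_1^{i_1}p_2^{i_2}$) are exactly your prepend step and shift invariance, done one unit at a time instead of in bulk. You merely spell out the bookkeeping (the disjoint decomposition over prefix families and the explicit choice of $n_0$) that the paper leaves implicit.
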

Note that if some $\delta_{i_1,i_2}$ is negative, then the position
$(p_1-1)(p_2-1)p_1^{i_1}p_2^{i_2}n + \delta_{i_1,i_2}$ may not exist for
small values of~$n$ because it is negative.  However, this does not really
matter because we are considering a limit when $N$ goes to infinity.

\begin{proof}
  In order to replace $\delta_{i_1,i_2}$ by $\delta_{i_1,i_2}+1$ for a
  single pair $(i_1,i_2)$, it suffices to sum up for all possible
  symbols~$a$ in~$A$, all equalities of Condition~(\ref{toeplitz-ii}) for
  $av_{i_1,i_2}$ to get the equality of Condition~(\ref{toeplitz-ii}) for
  $v_{i_1,i_2}$ with $\delta_{i_1,i_2}+1$.

  In order to replace $\delta_{i_1,i_2}$ by
  $\delta_{i_1,i_2}-(p_1-1)(p_2-1)p_1^{i_1}p_2^{i_2}$ for all pairs
  $(i_1,i_2)$, it suffices to replace $n$ by $n-1$.  These two replacements
  allow us to get any possible change of offsets.
\end{proof}

We now come to the proof of Theorem~\ref{thm:toeplitz}.
\begin{proof}[Proof of Theorem~\ref{thm:toeplitz}]
  We first prove that Condition~\ref{toeplitz-i} implies
  Condition~\ref{toeplitz-ii}.  We suppose that $\tau_P(x)$ is normal.  By
  Lemma~\ref{lem:shorter}, it suffices to prove Condition~\ref{toeplitz-ii}
  when the length of each word $u_{i_1,i_2}$ is
  $(p_1-1)(p_2-1)p_1^{i_1+\ell}p_2^{i_2+\ell}$ for some fixed but arbitrary
  integer~$\ell$.  Consider the decomposition $\tau_P(x) = w_1w_2w_3\cdots$
  where each word~$w_i$ has length~$(p_1p_2)^{k+1}$ and let $y$ be the
  sequence $y = w'_1w'_2w'_3\cdots$ where each word~$w'_i$ is given by
  $w'_i = \rho_J(w_i)$.  Since $w'_i$ is obtained from~$w_i$ by removing
  symbols at fixed positions, the sequence~$y$ is also normal.  Here we use
  the result that selecting digits along a periodic sequence of positions
  preserves normality~\cite{Wall1949}.  By Lemma~\ref{lem:toeplitz-key},
  the symbols of each word~$w'_i$ can be rearranged by a
  permutation~$\sigma$ to the concatenation
  \begin{displaymath}
     \prod_{ i_1=0} ^{k} \prod_{ i_2=0}^{ k}{z_{i,i_1,i_2}}
  \end{displaymath}
  where $z_{i,i_1,i_2}$ is the block in~$x$ of length
  $(p_1-1)(p_2-1)p_1^{i_1}p_2^{i_2}$ that starts at position
\linebreak
  $(p_1-1)(p_2-1)p_1^{i_1}p_2^{i_2}(i-1)+1$.  It follows that the
  concatenation $w_iw_{i+1} \cdots w_{i+(p_1p_2)^{\ell}-1}$ of
  $(p_1p_2)^{\ell}$ such words can be also rearranged by another
  permutation to the concatenation
  \begin{displaymath}
    \prod_{ i_1=0}^{k}\prod_{ i_2=0}^{k}
     \tilde z_{i,i_1,i_2}    
  \end{displaymath}
  where $\tilde{z}_{i,i_1,i_2}$ is the block in $x$ of length
  $(p_1-1)(p_2-1)p_1^{i_1+\ell}p_2^{i_2+\ell}$ that starts at position
  $(p_1-1)(p_2-1)p_1^{i_1+\ell}p_2^{i_2+\ell}(i-1)+1$.

  Since $y$ is normal, all words of length
  $(p_1^{k+1}-1)(p_2^{k+1}-1)(p_1p_2)^{\ell}$ occur with the same frequency
  in~$y$ as a concatenation $w_iw_{i+1} \cdots w_{i+(p_1p_2)^{\ell}-1}$.
  It follows that for every family $(u_{i_1, i_2})_{0 \le i_1, i_2 \le k}$
  of finite words such that $u_{i_1,i_2}$ has length
  $(p_1-1)(p_2-1)p_1^{i_1+\ell}p_2^{i_2+\ell}$ for $0 \le i_1, i_2 \le k$,
  the limit
  \begin{displaymath}
    \lim_{N\to\infty} \frac{1}{N}\#\left\{ n :
    \begin{array}{l}
      1\leq n \le N, \text{ and for all } 0 \le i_1, i_2 \le k,\\
      u_{i_1,i_2} \text{ occurs in $x$ at position
      $(p_1-1)(p_2-1)p_1^{i_1}p_2^{i_2}n+1$}
    \end{array} 
    \right\}
  \end{displaymath}
  has the same value.  By Lemma~\ref{lem:offsets}, the offset $+1$ in the
  position $(p_1-1)(p_2-1)p_1^{i_1}p_2^{i_2}n+1$ can be removed and the
  result is established.

  We now prove the converse.  By a result of Long~\cite{Long1957}, a
    number~$x$ is normal to base~$b$ if and only if there exists positive
    integers $m_1 < m_2 < \cdots$ such that $x$ is simply normal to the
    bases~$b^{m_i}$, $i \ge 1$.  Therefore, it is sufficient to prove that,
    for infinitely many length~$k$, all words of length~$k$ have the
    expected frequency in~$\tau_P(x)$.  We claim that for each word $w$ of
    length $(p_1p_2)^{\ell}$ for any integer~$\ell$, the frequency of~$w$
    at positions multiple of $(p_1p_2)^{\ell}$ is the expected one, namely
    $|A|^{-(p_1p_2)^{\ell}}$. Now suppose that $\ell$ is fixed and let
  $k$ be an integer to be fixed later.  Factorize
  $\tau_P(x) = w_1w_2w_3\cdots$ where each word~$w_i$ has
  length~$(p_1p_2)^{k+1}$ and let $y$ be the sequence
  $y = w'_1w'_2w'_3\cdots$ where $w'_i = \rho_J(w_i)$.  By
  Lemma~\ref{lem:toeplitz-key} and by the hypothesis of
  Condition~\ref{toeplitz-ii}, the sequence~$y$ is normal.  For each
  integer~$i$, the word~$w_i$ is obtained from~$w'_i$ by inserting
  $p_1^{k+1} + p_2^{k+1} - 1$ symbols. Each word~$w_i$ contains
  $(p_1p_2)^{k+1-\ell}$ blocks of size $(p_1p_2)^{\ell}$.  These
  $p_1^{k+1} + p_2^{k+1} - 1$ inserted symbols can spoil at most
  $p_1^{k+1} + p_2^{k+1} - 1$ blocks of size $(p_1p_2)^{\ell}$ but this
  number of possible spoiled blocks becomes negligible with respect to the
  total number of such blocks when $k$ goes to infinity.  Hence
    normality follows by taking $k$ great enough. This concludes the proof
  that $\tau_P(x)$ is normal.
\end{proof}
\bigskip

\section*{Acknowledgements}

Aistleitner is supported by the Austrian Science Fund (FWF), projects Y-901
and F 5512-N26.  Becher and Carton are members of the Laboratoire
International Associ\'e INFINIS, CONICET/Universidad de Buenos
Aires--CNRS/Universit\'e Paris Diderot and they are supported by the ECOS
project PA17C04.  Carton is also partially funded by the DeLTA project
(ANR-16-CE40-0007).\\

This paper was initiated in November 2016 during the workshop ``Normal Numbers: Arithmetic, Computational and Probabilistic Aspects'' at the Erwin Schr\"odinger International Institute for Mathematics and Physics (ESI) in Vienna. We thank the ESI for bringing the three of us together, and for providing a stimulating atmosphere for mathematical discussions.\\

The authors are very grateful to the anonymous referee for reading the first version of this paper with exceptional accurateness and for making many suggestions for possible improvements. The comments of the referee helped significantly to improve the presentation of the results and proofs in this paper.

%\bibliography{abc}
%\bibliographystyle{abbrv}

\bigskip
\bigskip

{\small
\begin{minipage}{\textwidth}
Christoph Aistleitner 
\\Institute of Analysis and Number Theory
\\Graz University of Technology, Austria
\\aistleitner@math.tugraz.at
\bigskip\\
\noindent
Ver\'onica Becher
\\
Departamento de  Computaci\'on, Facultad de Ciencias Exactas y Naturales \& ICC\\
 Universidad de Buenos Aires \&  CONICET, Argentina
\\
vbecher@dc.uba.ar
\bigskip\\
Olivier Carton
\\
Institut de Recherche en Informatique Fondamentale
\\Universit\'e Paris Diderot, France
\\olivier.carton@irif.fr
\end{minipage}
}

\end{document}